\newtheorem{definition}{Definition}[section]
\newtheorem{remark}{Remark}[section]
\newtheorem{theorem}{Theorem}[section]
\newtheorem{lemma}{Lemma}[section]
\newtheorem{corollary}{Corollary}[section]
\newtheorem{assumption}{Assumption}[section]
\numberwithin{equation}{section}
\newcommand{\N}{\mathbb{N}}
\newcommand{\R}{\mathbb{R}}
\newcommand{\Ac}{\mathcal{A}}
\newcommand{\cC}{\mathcal{C}}
\title{Approximation properties of neural ODEs}
\author{Arturo De Marinis\footnote{\tt\small Gran Sasso Science Institute (GSSI), L'Aquila, Italy, \{arturo.demarinis, nicola.guglielmi, francesco.tudisco\}[at]gssi.it}, Davide Murari\footnote{\tt\small University of Cambridge, Cambridge, United Kingdom, dm2011[at]cam.ac.uk}, Elena Celledoni\footnote{\tt\small Norwegian University of Science and Technology (NTNU), Trondheim, Norway, \{elena.celledoni, brynjulf.owren\}[at]ntnu.no}, Nicola Guglielmi$^*$, Brynjulf Owren$^\ddagger$, Francesco Tudisco$^*$\footnote{\tt\small University of Edinburgh, Edinburgh, United Kingdom, f.tudisco[at]ed.ac.uk }}
\date{}
\begin{document}

\maketitle

\begin{abstract}
    We study the approximation properties of neural ordinary differential equations (neural ODEs) in the space of continuous functions. Since a neural ODE requires input and output dimensions to be the same, while input and output dimensions of a continuous function are generally different, we need to embed an input into the latent space of the neural ODE, and to project the output of the neural ODE into the output space. By composing the neural ODE flow map with such embedding and projection operations, we get a shallow neural network whose activation function is defined as the flow map of the neural ODE at the final time of the integration interval. Thus, the study of the approximation properties of neural ODEs leads to the study of the approximation properties of shallow neural networks with a particular choice of activation function. We prove the universal approximation property (UAP) of such shallow neural networks in the space of continuous functions. Furthermore, we investigate the approximation properties of shallow neural networks whose parameters satisfy specific constraints. In particular, we constrain the Lipschitz constant of the neural ODE's flow map and the norms of the weights to increase the network's stability. We prove that the UAP holds if we consider either constraint independently. When both are enforced, there is a loss of expressiveness, and we derive approximation bounds that quantify how accurately such a constrained network can approximate a continuous function.
\end{abstract}

\section{Introduction}
Neural ordinary differential equations (neural ODEs) \cite{chen2018neural,haber2017stable} are continuous-time dynamical systems of the form
\begin{equation}\label{eq:neuralODE_i}
    \dot{u}(t) := \dv{}{t}u(t) = f(u(t), t, \theta), \quad t\in[0,T],
\end{equation}
with a specific choice of vector field \( f \), that model the forward pass of a neural network as the evolution of a feature vector \( u(t) \). The vector field \( f \) is chosen to be a neural network parameterised by \( \theta \), for ease of computation, hence the name neural ODE. For a fixed input $u_0$, the output of the neural network defined through the neural ODE \eqref{eq:neuralODE_i} is given by the solution $u(T)$ of \eqref{eq:neuralODE_i} corresponding to the initial value $u_0$.

The map $u_0\to u(T)$ is called flow map of the neural ODE at time $T$. Therefore, the neural network defined by the neural ODE \eqref{eq:neuralODE_i} is its flow map at time $T$. The ability to find the exact flow map is highly dependent on the structure of the differential equation. For the neural ODE \eqref{eq:neuralODE_i}, the exact flow map is not known, thus numerical methods are needed for its approximation. The numerical integration of the flow map leads to deep discrete architectures. For instance, applying the Euler method to \eqref{eq:neuralODE_i} over a partition \( 0=t_0< t_1 < \dots < t_N =T \) of $[0,T]$, with step size \( h = T/N \), results in a residual neural network \cite{ResNet}
\[
    u_{k+1} = u_k + h f(u_k, t_k, \theta), \quad k=0,1,\dots,N-1,
\]
where \( u_k \approx u(t_k) \), and the number of discretisation points \( N \) determines the depth of the neural network.

The neural ODE formulation of neural networks thus generalises residual and recurrent neural networks, and has been extensively explored in modern machine learning, achieving notable success across diverse applications with various choices of \( f \), including second-order damped oscillators \cite{rusch2021coupled,zhang2025stuart}, state-space models \cite{gu2023mamba,gu2020hippo}, diffusion-based generative models \cite{karras2022elucidating,song2021score}, and graph neural networks \cite{chamberlain2021grand,nguyen2024coupled}.

One of the key advantages of formulating neural networks via ODEs is the ability to study their theoretical properties through dynamical systems analysis, particularly in terms of stability, contractivity, conservation laws, and approximation \cite{celledoni2021structure,de2025stability, guglielmi2024contractivity}.

In this paper, we focus on the approximation properties of neural ODEs, namely their density properties into a larger space of functions, as the space $\mathcal{C}(\R^m,\R^n)$ of continuous functions from $\R^m$ to $\R^n$, $m,n\in\N$ fixed. We notice that flow maps of differential equations have the same input and output dimensions. Since $m$ and $n$ are usually different, we thus need to embed an $m$-dimensional input into the latent space of the neural ODE, and to project the output of the neural ODE into the $n$-dimensional output space. These embedding and projection operations are usually carried out with simple affine maps.

If we define
\begin{itemize}
    \item $d$ the dimension of the latent space of the neural ODE \eqref{eq:neuralODE_i},
    \item $\ell_1(u) = A_1 u + b_1$ an affine map from $\R^m$ to $\R^d$, with $A_1\in\R^{d\times m}$, $b_1\in\R^d$,
    \item $\ell_2(u) = A_2 u + b_2$ an affine map from $\R^d$ to $\R^n$, with $A_2\in\R^{n\times d}$, $b_2\in\R^n$,
\end{itemize}
and we denote by $\phi:\R^d\to\R^d$ the flow map at time $T$ of the neural ODE \eqref{eq:neuralODE_i}, then we are studying the approximation properties of the space of functions $\varphi:\R^m\to\R^n$ defined as
\begin{equation}\label{eq:a}
      \varphi(x) = \ell_2 \circ \phi \circ \ell_1(x), \quad x\in\R^m.
\end{equation}
The function $\varphi$ looks like a shallow neural network, but it is actually a deep neural network because the computation of $\phi$ requires the evaluation of a deep neural network architecture. Nevertheless, by looking at $\varphi$ as a shallow neural network, we can employ the wide and well-developed approximation theory of shallow neural networks, together with dynamical system analysis, to study the approximation properties of neural ODEs.

In this article, we consider one-layer weight-tied neural ODEs of the form
\begin{equation*}\label{eq:neuralODE1}
    \dot{u}(t) = \sigma (A u(t) + b), \quad t\in[0,T],
\end{equation*}
where $\sigma:\R\to\R$ is a smooth activation function applied entry-wise, $T>0$ is the time horizon, $A\in\R^{d\times d}$ is the weight matrix and $b\in\R^d$ is the bias vector. A composition of functions $\sigma (A_i u + b_i)$, $i=1,2,\ldots$, allows us to obtain a multi-layer neural ODE, and the results in this article can be generalised to this setting as well.

\subsection{Our contribution}
We study the approximation properties of neural network architectures of the form \eqref{eq:a}. These networks can be thought of as shallow networks with arbitrary width and a nonstandard choice of activation function: the flow map of a neural ODE \cite{chen2018neural}. 

As in \cite{cybenko1989approximation,hornik1991approximation,hornik1989multilayer,leshno1993multilayer}, we consider the compact convergence topology, 

\begin{definition}\label{de:uap}
    Let $m,n\in\N$ and $\|\cdot\|$ a norm on $\mathbb{R}^n$. A set of functions $\mathcal{H}$ from $\R^m$ to $\R^n$ is a universal approximator for $\cC(\R^m,\R^n)$ under the compact convergence topology if, for any $f\in \cC(\R^m,\R^n)$, for any $K\subset\R^m$ compact, and for any $\varepsilon>0$, there exists $\varphi \in \mathcal{H}$ such that
    \begin{equation}\label{eq:app_err}
        \|f-\varphi\|_{\infty, K} := \max_{x\in K} \|f(x)-\varphi(x)\| \leq \varepsilon.
    \end{equation}
\end{definition}

In Definition \ref{de:uap}, we leave the norm $\|\cdot\|$ as an unspecified norm on $\mathbb{R}^n$. Later derivations will focus on the $\ell^2$ norm $\|\cdot\|_2$. By using this topology, we prove that the space of shallow neural networks with a flow map as activation function is a universal approximator for the space of continuous functions $\mathcal{C}(\R^m,\R^n)$. Furthermore, we empirically show that neural networks of this form have a better parameter efficiency\footnote{A neural network has better parameter efficiency than another one with the same number of parameters if it is more expressive than the latter.} than standard shallow networks.

We then show that constraining either the Lipschitz constant of the flow map or the norm of the weights does not affect the universal approximation property. 
Constraining both leads to a loss in expressiveness, that we quantify by deriving an upper and a lower bound to \eqref{eq:app_err}. 

\subsection{Existing literature on the approximation theory of neural networks}
Neural networks are a common choice for approximating continuous functions in various applications, as their ability to approximate has been validated by numerous studies over the past few decades, and this remains an active area of research.

In 1989, Cybenko \cite{cybenko1989approximation} and Hornik et al. \cite{hornik1989multilayer} showed that shallow neural networks with arbitrary width and sigmoidal activation functions, i.e. increasing functions $\sigma:\R\to\R$ with $\lim_{x\to-\infty}\sigma(x)=0$ and $\lim_{x\to\infty}\sigma(x)=1$, are universal approximators for continuous functions. In 1991, Hornik \cite{hornik1991approximation} showed that it is not the specific choice of the activation function but rather the compositional nature of the network that gives shallow neural networks with arbitrary width the universal approximation property. In 1993, Leshno et al. \cite{leshno1993multilayer} and, in 1999, Pinkus \cite{pinkus1999approximation} showed that shallow neural networks with arbitrary width are universal approximators if and only if their activation function is not a polynomial. Other relevant references for the approximation properties of shallow neural networks of arbitrary width are \cite{barron1993universal,funahashi1989approximate,hassoun1995fundamentals,haykin1998neural}.

The recent breakthrough of deep learning has stimulated much research on the approximation ability of arbitrarily deep neural networks. Gripenberg in 2003 \cite{gripenberg2003approximation}, Yarotsky, Lu et al. and Hanin and Selke in 2017 \cite{ hanin2017approximating,lu2017expressive,yarotsky2017error} studied the universal approximation property of deep neural networks with arbitrary depth, bounded width, and with ReLU activation functions. Kidger and Lyons \cite{kidger2020universal}, in 2020, extended those results to arbitrarily deep neural networks with bounded width and arbitrary activation functions. Park \cite{park2021minimum}, in 2021, improved the results by Hanin \cite{hanin2017approximating} for deep ReLU neural networks with bounded width, and Cai \cite{cai2023achieve} in 2023 proved the approximation property for deep neural networks with bounded width and with the LeakyReLU activation function. Further research has explored the approximation capabilities of ReLU networks for a variety of function classes, including continuous functions \cite{shen2019deep,yarotsky2017error,yarotsky2018optimal}, smooth functions \cite{lu2021deep,yarotsky2020phase}, piecewise-smooth functions \cite{petersen2018optimal}, shift-invariant spaces \cite{yang2022approximation}, and band-limited functions \cite{montanelli2021deep}. Other valuable references for the approximation properties of arbitrarily deep neural networks of bounded width are \cite{johnson2019deep,juditsky2009nonparametric,kratsios2022universal,poggio2017and}.

In 1999, Maiorov and Pinkus \cite{maiorov1999lower} studied networks of bounded depth and width. They showed that there exists an analytic sigmoidal activation function such that neural networks with two hidden layers and bounded width are universal approximators. In 2018, Guliyev and Ismailov \cite{guliyev2018approximation} designed a family of two hidden layer feed-forward networks with fixed width and a specific sigmoidal activation function that is a universal approximator of continuous functions. In the same year, Guliyev and Ismailov \cite{guliyev2018on} also constructed shallow neural networks with bounded width that are universal approximators for univariate functions. However, this construction does not apply to multivariate functions. In 2022, Shen et al. \cite{shen2022optimal} computed precise quantitative information on the depth and width required to approximate a target function by deep and wide ReLU neural networks.


\subsection{Preliminaries}
In the following, we define the subspace $\mathcal{H}$ as the space of shallow neural networks of arbitrary width with an activation function given by the flow map of a neural ODE at time $t=1$.

Let $d\in\N$ and $\sigma:\R\to\R$ be an activation function that satisfies the following assumption.
\begin{assumption}\label{ass:1}
    $\sigma$ is differentiable almost everywhere and $\sigma'(\R)\subset[\alpha,1]$, with $0<\alpha\le1$.
\end{assumption}

\begin{remark}
    An example of such an activation function is the LeakyReLU
    \[
        \sigma(z) =
        \begin{cases}
            z, \qquad &\mbox{if } z \geq 0,\\
            \alpha z, \qquad &\mbox{otherwise},
        \end{cases}
    \]
    or a smoothed version of it, like
    \begin{equation}\label{eq:act_fun}
        \sigma(z) =
        \begin{cases}
            z, \qquad &\mbox{if } z \geq 0,\\
            \tanh{z}, \qquad &\mbox{if }  -\bar{z} \le z < 0,\\
            \alpha z + \beta, \qquad &\mbox{otherwise},
        \end{cases}
    \end{equation}
    where $\bar{z}>0$ is such that $\tanh'{(\pm\bar{z})}=\alpha=0.1$ and $\beta\in\R$ such that $\alpha (-\bar{z}) + \beta = \tanh{(-\bar{z})}$ (see Figure \ref{fig:custom_act_fun}).
    \begin{figure}[ht]
        \centering
        \includegraphics[width=0.5\textwidth]{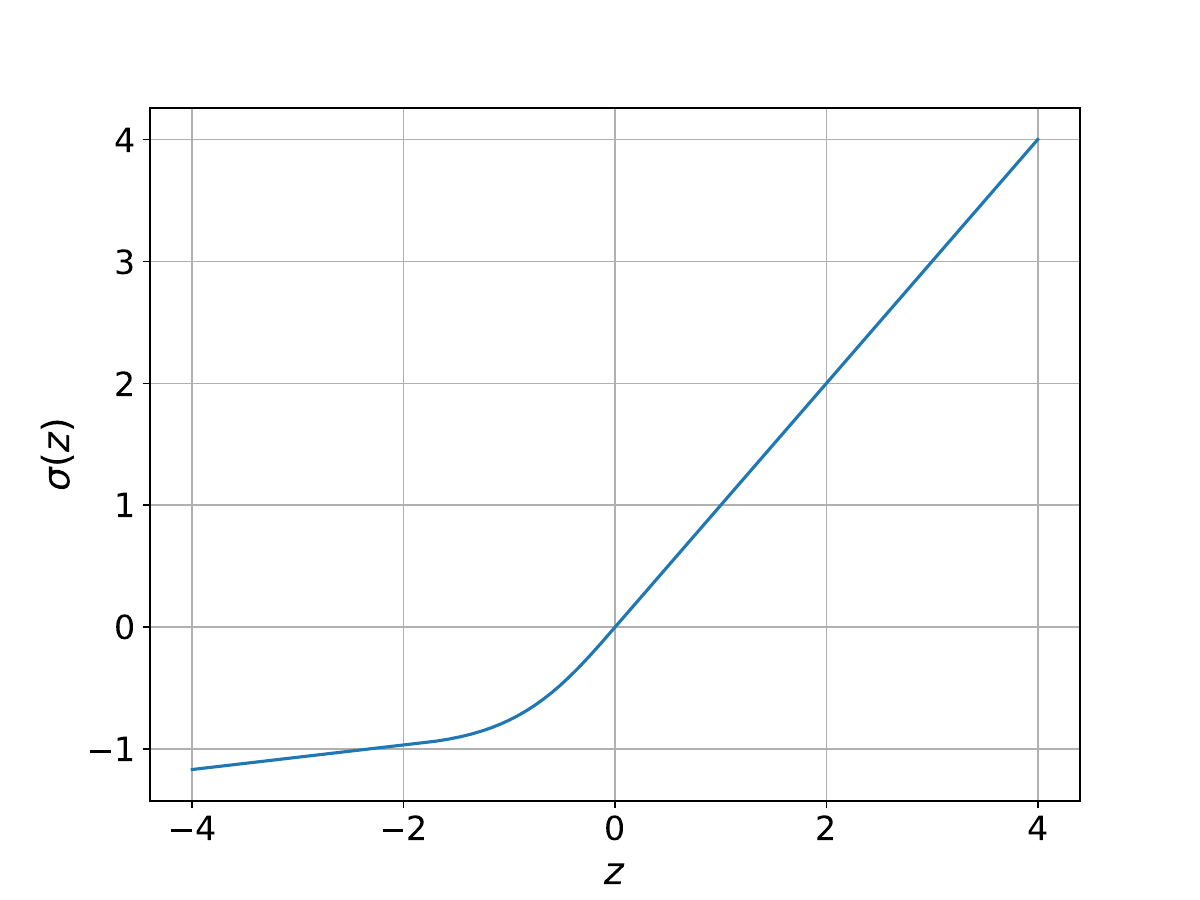}
        \caption{Smoothed Leaky Rectified Linear Unit (LeakyReLU) with minimal slope $\alpha=0.1$, as defined in \eqref{eq:act_fun}.}
        \label{fig:custom_act_fun}
    \end{figure}
\end{remark}

We consider the shallow neural network
\begin{equation}\label{eq:snn}
    \varphi(x) = A_2 \phi (A_1 x + b_1) + b_2, \quad x\in\R^m,
\end{equation}
with $A_1\in\R^{d\times m}$, $b_1\in\R^d$, $A_2\in\R^{n\times d}$, $b_2\in\R^n$, and the activation function $\phi:\R^d\to\R^d$ is the time$-1$ flow map of the neural ODE
\begin{equation}\label{eq:nODE}
    \dot{u}(t) = \sigma( A u(t) + b ), \quad t\in[0,1],
\end{equation}
with $A\in\R^{d\times d}$, $b\in\R^d$, $u(t)\in\R^d$ for all $t\in[0,1]$, and $\sigma$ is applied entrywise. If we define the affine functions
\[
    \ell_1(x) = A_1 x + b_1, \quad x\in\R^m, \qquad \text{and} \qquad \ell_2(x) = A_2 x + b_2, \quad x\in\R^d,
\]
we can rewrite $\varphi$ as follows
\[
    \varphi(x) = \ell_2 \circ \phi \circ \ell_1 (x), \quad x\in\R^m,
\]
i.e. as the composition of three functions: the affine layer $\ell_1$, the activation function $\phi$, and the affine layer $\ell_2$.

\begin{remark}
    From here on, when referring to the flow map of the neural ODE \eqref{eq:nODE}, we always consider it at time $1$.
\end{remark}

Before formally introducing the space $\mathcal{H}$, we need the following definitions.

\begin{definition}
    Fix $p, q \in \N$. We define the space of affine transformations from $\R^p$ to $\R^q$ as
    \[
        \Ac(\R^p,\R^q) = \{x\in\R^p \to A x + b \in \R^q \ : \ A\in\R^{q\times p},\ b\in\R^{q} \}.
    \]
\end{definition}

\begin{definition}
    Let $\phi_{A,b}$ be the flow map of the neural ODE \eqref{eq:nODE} for fixed $A\in\R^{d\times d}$ and $b\in\R^d$. We define the space of neural ODE flow maps as
    \[
        \mathcal{F}_d = \{ \phi_{A,b}:\R^d\to\R^d \ : \ A\in\R^{d\times d}, \ b\in\R^d \},
    \]
    where $A\in\R^{d\times d}$ is the weight matrix and $b\in\R^d$ is the bias vector.
\end{definition}

Then we can define the subspace $\mathcal{H}$ of $\mathcal{C}(\R^m,\R^n)$ as follows.

\begin{definition}
    We define the space of shallow neural networks \eqref{eq:snn} from $\R^m$ to $\R^n$ with arbitrary width and with activation function defined as a neural ODE flow map as
    \[
        \mathcal{H} = \{ \ell_2 \circ \phi \circ \ell_1 \ : \ d\in\N,\ \ell_1\in\Ac(\R^m,\R^d),\ \phi\in\mathcal{F}_d,\ \ell_2\in\Ac(\R^d,\R^n) \}.
    \]
\end{definition}

Eventually, we recall the definition of logarithmic norm.

\begin{definition}[see \cite{soderlind2006logarithmic,soderlind2024logarithmic}]\label{def:log_norm}
    Given $d\in\N$ and a matrix $A\in\R^{d\times d}$, denoted by $\|\cdot\|$ a matrix norm induced by a vector norm, we define the logarithmic norm of $A$ as
    \[
        \mu(A) = \lim_{h\to0^+} \frac{\|I+hA\|-1}{h}.
    \]
\end{definition}

\begin{remark}
    If the matrix norm is induced by the 2-norm $\|\cdot\|_2$, then the logarithmic 2-norm of $A$, denoted by $\mu_2(A)$, coincides with the largest eigenvalue of its symmetric part $\frac{1}{2}(A+A^\top)$.
\end{remark}

\subsection{Paper organisation}
The paper is organised as follows. In Section \ref{sec:uat}, we prove that the space of shallow neural networks with arbitrary width and the proposed choice of activation function is a universal approximator for the space of continuous functions under the compact convergence topology. We conclude the section by introducing two constraints that improve the stability of the proposed shallow neural network at the risk of losing the universal approximation property. The former is a constraint on the Lipschitz constant of the activation function to restrict its expansivity. The latter is a constraint of fixed 2-norm of the weight matrices $A_1$ and $A_2$ to ensure that the restricted expansivity of the activation function is not compensated by the increased expansivity in the linear layers. In Section \ref{sec:approx_bounds}, we derive approximation bounds that tell us how accurately we can approximate a continuous function with a shallow neural network satisfying both constraints. In Section \ref{sec:uat_unc}, we show that the universal approximation property holds if either the constraint on the Lipschitz constant of the activation function or the fixed norm constraint on the weight matrices is considered.

\section{Universal approximation theorem}\label{sec:uat}
We are ready to state the first main result of this paper.

\begin{theorem}\label{th:univ_approx_1}
    The space of functions
    \[
        \mathcal{H} = \{ \ell_2 \circ \phi \circ \ell_1 \ : \ d\in\N,\ \ell_1\in\Ac(\R^m,\R^d),\ \phi\in\mathcal{F}_d,\ \ell_2\in\Ac(\R^d,\R^n) \}
    \]
    is a universal approximator for $\cC(\R^{m},\R^{n})$ under the compact convergence topology.
\end{theorem}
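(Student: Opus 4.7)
The plan is to reduce the theorem to the classical Leshno--Pinkus universal approximation theorem: the set of shallow networks of arbitrary width with scalar activation $\tau \in \cC(\R,\R)$ is dense in $\cC(\R^m,\R^n)$ under the compact convergence topology if and only if $\tau$ is not a polynomial. I would identify a subfamily of $\mathcal{H}$ that coincides exactly with the standard shallow-network class for an explicit, canonically chosen scalar $\tau$, and then verify the hypotheses of Leshno--Pinkus for that $\tau$.

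The subfamily is obtained by restricting, for every width $d$, the neural ODE \eqref{eq:nODE} to the choice $A = I_d$ and $b = 0$. With this choice the vector field $u \mapsto \sigma(u)$ acts entrywise, the ODE decouples across coordinates, and the flow $\phi \in \mathcal{F}_d$ factors as $\phi(v)_i = \tau(v_i)$, where $\tau\colon\R\to\R$ is the time-$1$ flow of the scalar ODE $\dot u = \sigma(u)$. Any $\ell_2 \circ \phi \circ \ell_1 \in \mathcal{H}$ built from such a $\phi$ is thus exactly a shallow network of width $d$ with entrywise activation $\tau$, and by stacking blocks in $A_1, b_1, A_2, b_2$ one recovers all widths and all vector-valued target dimensions.

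It remains to verify that $\tau$ is continuous and non-polynomial. Continuity follows from Picard--Lindel\"of, since Assumption \ref{ass:1} makes $\sigma$ Lipschitz and the flow then depends continuously on the initial datum. The non-polynomial step is the only place where some care is needed, and it is the principal obstacle of the proof. The key input is the variational identity
\[
    \tau'(x) \;=\; \exp\!\left(\int_0^1 \sigma'(u(t,x))\, dt\right) \;\in\; [e^\alpha,\, e],
\]
valid wherever $\tau'$ exists, which shows that $\tau'$ is uniformly bounded. A polynomial with bounded derivative must be affine, so it suffices to rule out that $\tau$ itself is affine; but by the semigroup property of the flow an affine time-$1$ map forces the vector field $\sigma$ to be affine as well, which is inconsistent with the non-degenerate content of Assumption \ref{ass:1} and with its LeakyReLU-type prototypes. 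With $\tau$ continuous and non-polynomial, Leshno--Pinkus yields density of the identified subfamily, and hence of $\mathcal{H}$, in $\cC(\R^m,\R^n)$ under the compact convergence topology.
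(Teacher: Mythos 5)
Your overall architecture is the same as the paper's: restrict to neural ODEs whose right-hand side acts entrywise (you take $A=I_d$, $b=0$; the paper takes $A=I_d$, $b=\lambda e$, an immaterial difference since the shift can be absorbed into $b_1$), observe that the flow then factors as a scalar activation $\tau$ applied componentwise, and invoke Leshno--Pinkus. The crux in both cases is showing that $\tau$ is not a polynomial, and this is where your argument has a genuine gap. Your first reduction is correct and is in fact a nice simplification: the variational bound makes $\tau$ globally Lipschitz, so if $\tau$ were a polynomial it would have to be affine. But the final step --- ``by the semigroup property of the flow an affine time-$1$ map forces the vector field $\sigma$ to be affine'' --- is unjustified and false as stated. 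The semigroup property only gives that $\phi_n=\phi_1^{\circ n}$ is affine at integer times; it says nothing about $\phi_t$ for non-integer $t$, and $\sigma(x)=\lim_{n\to\infty}n(\phi_{1/n}(x)-x)$ is not controlled by $\phi_1$ alone. Concretely, on $(0,\infty)$ write $G(u)=\int^u ds/\sigma(s)$, so that $\phi_t=G^{-1}(G(\cdot)+t)$; the condition $\phi_1(x)=cx$ reads $G(cx)=G(x)+1$, whose general solution is $G(x)=\log_c x+Q(\log_c x)$ with $Q$ an arbitrary $1$-periodic function. Taking $Q$ a small, smooth, nonconstant perturbation and $\ln c\in(\alpha,1)$ yields a non-affine increasing $\sigma$ with $\sigma'\in[\alpha,1]$ a.e.\ whose time-$1$ flow is exactly linear. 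So ``$\tau$ affine $\Rightarrow\sigma$ affine'' cannot be derived from the semigroup property and in fact fails under Assumption \ref{ass:1} alone.

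This is precisely the point the paper outsources to the Bass--Meisters rigidity result (Lemma \ref{lem:pol_flow}), so your proof needs either to cite such a result or to sidestep the general claim by verifying non-affineness of $\tau$ directly for a concrete admissible $\sigma$: for LeakyReLU one computes $\tau(x)=ex$ for $x\ge0$ and $\tau(x)=e^{\alpha}x$ for $x<0$, which is not affine since $\alpha<1$. The latter fix is easy but proves the theorem only for that particular $\sigma$ rather than for every $\sigma$ satisfying Assumption \ref{ass:1}. A minor further caveat, shared with the paper: Assumption \ref{ass:1} does not by itself exclude $\sigma(z)=z$, for which every flow map in $\mathcal{F}_d$ is affine and the theorem is false, so non-affineness of $\sigma$ must be assumed explicitly somewhere; your appeal to the ``non-degenerate content'' of the assumption is not something the assumption actually contains.
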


To prove Theorem \ref{th:univ_approx_1}, we need the following definition and lemmas.

\begin{definition}
    Let $\mathcal{M}$ denote the set of real functions which are in $L^\infty_{loc}(\R)$ and have the following property: the closure of their sets of points of discontinuity is of zero Lebesgue measure.
\end{definition}
For each function $\sigma\in\mathcal{M}$, interval $[a,b]\subset\mathbb{R}$, and scalar $\delta >0$, there exists a finite number of open intervals, the union of which we denote by $U$, of measure $\delta$, such that $\sigma$ is uniformly continuous on $[a,b]\setminus U$.
This is important in the proof of the next lemma.

\begin{lemma}[Theorem 1, page 10, \cite{leshno1993multilayer}]\label{lem:univ_approx}
    Let $\sigma\in\mathcal{M}$. Set
    \[
        \Sigma_{m,1} = \left\{ x\in\R^m \to \sum_{i=1}^k c_i \sigma (a_i \cdot x + b_i) + d \ : \ k\in\N,\ a_i\in\R^m,\ b_i, c_i, d \in \R,\ \forall i = 1,\dots,k \right\}.
    \]
    Then $\Sigma_{m,1}$ is dense in $\cC(\R^m,\R)$ under the compact convergence topology if and only if $\sigma$ is not a polynomial.
\end{lemma}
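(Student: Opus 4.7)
The ``only if'' direction is straightforward. If $\sigma$ is a polynomial of degree at most $N$, then every element of $\Sigma_{m,1}$ is itself a polynomial in $x$ of total degree at most $N$. Such polynomials form a finite-dimensional, hence closed, subspace of $\cC(\R^m,\R)$, and plainly do not contain monomials of higher degree on any compact set with non-empty interior, so $\Sigma_{m,1}$ cannot be dense.

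For the ``if'' direction, assume $\sigma\in\mathcal{M}$ is not a polynomial. I would first reduce to the univariate case $m=1$: if $\Sigma_{1,1}$ is dense in $\cC(\R,\R)$, then for every continuous $g$ and every $a\in\R^m$ the ridge function $x\mapsto g(a\cdot x)$ lies in the compact-uniform closure of $\Sigma_{m,1}$ (simply replace the univariate argument by $a\cdot x$ in the approximating sums). Combined with the classical density of continuous ridge-function spans in $\cC(\R^m,\R)$ (see \cite{pinkus1999approximation}), this reduces the problem to showing density of $\Sigma_{1,1}$ in $\cC(\R,\R)$. By the Weierstrass approximation theorem, it further suffices to show that every monomial $x^k$ lies in the compact-uniform closure of $\Sigma_{1,1}$.

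The core univariate step combines mollification with finite differences in the parameter $a$. Let $\psi$ be a smooth compactly supported mollifier and set $\sigma_\psi := \sigma * \psi$. Because $\sigma \in L^\infty_{loc}(\R)$ has a discontinuity set whose closure has Lebesgue measure zero, the convolution
\[
    \sigma_\psi(ax+b) \;=\; \int \sigma\bigl(ax+(b-y)\bigr)\,\psi(y)\,dy
\]
is the limit, uniformly on compact sets of $(x,a,b)$, of Riemann sums of the form $\sum_j \psi(y_j)\,\sigma(ax+b-y_j)\,\Delta y_j$, each of which lies in $\Sigma_{1,1}$. Hence any compact-uniform approximation built from $\sigma_\psi$ transfers to one built from $\sigma$. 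Exploiting the smoothness of $\sigma_\psi$, the parameter identity
\[
    \frac{\partial^k}{\partial a^k}\sigma_\psi(ax+b)\bigg|_{a=0} \;=\; x^k\,\sigma_\psi^{(k)}(b)
\]
expresses $x^k \sigma_\psi^{(k)}(b)$ as a compact-uniform limit of $k$th-order divided differences in $a$ at the origin, which are finite linear combinations of translates of $\sigma_\psi$. After the transfer, these divided differences lie in the closure of $\Sigma_{1,1}$, and choosing any $b_k$ with $\sigma_\psi^{(k)}(b_k)\ne 0$ places $x^k$ in that closure.

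The main obstacle is therefore to guarantee, for at least one mollifier $\psi$, that $\sigma_\psi$ is not a polynomial, so that arbitrarily high derivatives do not vanish identically. I would argue by contradiction: if $\sigma*\psi$ were a polynomial for every smooth compactly supported $\psi$, then from $\partial^\alpha(\sigma*\psi) = \sigma*\partial^\alpha\psi$ and a suitable approximation of the delta by mollifiers one deduces that $\sigma$ itself is polynomial distributionally, and hence almost everywhere, contradicting the hypothesis. The delicate technical point throughout is the simultaneous uniform control of the Riemann-sum transfer and of the divided-difference approximation on compacta in $x$, $a$ and $b$; the hypothesis $\sigma\in\mathcal{M}$ is precisely what makes both controls available.
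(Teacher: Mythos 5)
The paper does not prove this statement: it is quoted verbatim as Theorem~1 of \cite{leshno1993multilayer} and used as an imported lemma, so there is no internal proof to compare against. Measured instead against the original argument of Leshno, Lin, Pinkus and Schocken, your sketch is a faithful reconstruction of exactly that proof: the finite-dimensionality argument for the ``only if'' direction, the reduction to $m=1$ via density of ridge-function spans, the transfer of the mollification $\sigma*\psi$ into the closure of $\Sigma_{1,1}$ by Riemann sums (which is precisely where the hypothesis $\sigma\in\mathcal{M}$ and the uniform-continuity-off-a-small-open-set property are consumed), and the recovery of the monomials $x^k$ from $k$th-order divided differences in the weight parameter $a$ at $a=0$. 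These are the same steps, in the same order, as in the cited source and in Pinkus's survey \cite{pinkus1999approximation}.

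One step of your sketch is genuinely under-justified: the contradiction argument showing that $\sigma*\psi$ cannot be a polynomial for \emph{every} mollifier $\psi$. The difficulty is not distributional formalities but uniformity of the degree: a priori $\sigma*\psi$ could be a polynomial whose degree grows with $\psi$, and the statement ``$\sigma*\psi$ is a polynomial of unbounded degree as $\psi$ varies'' does not immediately force $\sigma$ to be a polynomial. The original proof closes this gap with a Baire category argument on the complete metric space $C_0^\infty$ supported in a fixed interval, applied to the closed sets $\{\psi : \deg(\sigma*\psi)\le n\}$, to extract a uniform degree bound on a ball and hence everywhere by linearity; only then does the distributional identity yield that $\sigma$ agrees almost everywhere with a polynomial. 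Your phrase ``a suitable approximation of the delta by mollifiers'' skips this, and as written the deduction would fail without the uniform bound. With that one repair the sketch is a correct account of the standard proof.
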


Lemma \ref{lem:univ_approx} is the first universal approximation theorem by Leshno et al. \cite{leshno1993multilayer} stating a very simple condition for the space $\Sigma_{m,1}$ of shallow neural networks to be a universal approximator for $\cC(\R^m,\R)$, i.e. the activation function $\sigma:\R\to\R$ has not to be a polynomial. All commonly used activation functions, such as ReLU, LeakyReLU, and $\tanh$, satisfy this requirement.

\begin{remark}\label{rem:ua}
    Lemma \ref{lem:univ_approx} implies the universal approximation property of the space of functions
    \[
        \Sigma_{m,n} = \left\{ x\in\R^m \to C \sigma (A x + b) + d \in\R^n \ : \ k\in\N,\ A\in\R^{kn\times m},\ b\in\R^{kn},\ C\in\R^{n\times kn},\  d\in\R^n \right\}
    \]
    in $\cC(\R^m,\R^n)$. Indeed, if
    \[
        F(x) = C \sigma (A x + b) + d, \quad x\in\R^m,
    \]
    with $\sigma\in\mathcal{M}$ applied entrywise,
    \[
        A = \begin{bmatrix}
                A_1 \\
                A_2 \\
                \vdots \\
                A_n
            \end{bmatrix},\
        b = \begin{bmatrix}
                b_1 \\
                b_2 \\
                \vdots \\
                b_n
            \end{bmatrix},\
        C = \begin{bmatrix}
                c_1^\top & 0        & \cdots & 0 \\
                0        & c_2^\top & \cdots & 0 \\
                \vdots   &          & \ddots & \vdots  \\
                0        & \cdots   & 0      & c_n^\top
            \end{bmatrix},
    \]
    $A_i\in\R^{k\times m}$, $b_i\in\R^k$ and $c_i\in\R^k$, $i=1,\dots,n$, and $d\in\R^n$, then the $i$-th component of $F$ is
    \[
        F_i(x) = c_i^\top \sigma (A_i x + b_i) + d_i = \sum_{j=1}^k c_{ij} \sigma (a_{ij}\cdot x + b_{ij}) + d_i.
    \]
    We notice that $F_i\in\Sigma_{m,1}$ for all $i=1,\dots,n$, and since $\Sigma_{m,1}$ is dense in $\cC(\R^m,\R)$ under the compact convergence topology if $\sigma$ is not a polynomial, then $\Sigma_{m,n}$ is dense in $\cC(\R^m,\R^n)$ under the same topology.

    The important hypotheses to remember here are that the activation function $\sigma:\R\to\R$ is applied entrywise and that it is not a polynomial.
\end{remark}

The next Lemma follows. It states that the only scalar ordinary differential equations whose flow map is a polynomial in $x$ are the linear ordinary differential equations, and their flow map is a linear polynomial in $x$.

\begin{lemma}[Proposition 4.2, page 179 in \cite{bass1985polynomial}]\label{lem:pol_flow}
    Let $\phi:\R\times\R\to\R$, $\phi=\phi(t,x)$, be a real flow map polynomial in $x$ for all times $t$, and $\dot{u}(t)=f(u(t))$ the corresponding differential equation, i.e. $\phi(t,u(0))=u(t)$. Then there are constants $a,b\in\R$ such that $f(u)=au+b$ and
    \[
        \phi(t,x)=e^{at}x+\frac{b}{a}(e^{at}-1),
    \]
    where we interpret $(e^{at}-1)/a$ to be $t$ for $a=0$.
\end{lemma}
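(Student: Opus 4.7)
The plan is to establish the lemma in three steps: first show that $f$ itself must be a polynomial in its argument, then argue its degree is at most one, and finally solve the resulting affine ODE in closed form to match the claimed expression for $\phi$.

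For the first step, I would use the flow equation $\phi_t(t,x) = f(\phi(t,x))$ together with the initial condition $\phi(0,x)=x$. Evaluating at $t=0$ gives $f(x) = \phi_t(0,x)$. Writing $\phi(t,x) = \sum_{k=0}^{N} c_k(t)\, x^k$ as a polynomial in $x$ (with $c_1(0)=1$ and $c_k(0)=0$ for $k\neq 1$), and invoking that $\phi$ is $C^1$ in $t$ as an ODE solution, the coefficients $c_k$ are differentiable at $t=0$. Hence $f(x) = \sum_k c_k'(0)\, x^k$ is itself a polynomial. The degree boundedness near $t=0$ can be extracted either from local interpolation at $N+1$ distinct points or, more cleanly, from the semigroup identity $\phi(t+s,\cdot) = \phi(t,\cdot)\circ\phi(s,\cdot)$, which forces the degree map $t\mapsto\deg_x\phi(t,\cdot)$ to be multiplicative with value $1$ at $0$.

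Next, I would argue by contradiction that $\deg f \le 1$. Suppose $\deg f = k \ge 2$ with leading coefficient $a_k \neq 0$. For $|u|$ large one has $|f(u)| \ge \tfrac{|a_k|}{2}|u|^k$, and since $k\ge 2$ the improper integral $\int^{\pm\infty} du/f(u)$ converges. Therefore, choosing $x$ of sufficiently large absolute value and selecting an appropriate time direction (forward or backward, determined by $\mathrm{sgn}(a_k)$ and the parity of $k$), the separation-of-variables identity
\[
    t = \int_{x}^{u(t)} \frac{du}{f(u)}
\]
forces $u(t)\to\pm\infty$ in finite time. This contradicts the hypothesis that $\phi$ is defined on all of $\R\times\R$. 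Hence $f(u)=au+b$ for constants $a,b\in\R$.

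Finally, integrating the affine ODE $\dot u = au+b$ with $u(0)=x$ is routine: for $a\neq 0$, the substitution $v=u+b/a$ gives $\dot v=av$, hence $v(t)=e^{at}v(0)$ and $\phi(t,x)=e^{at}x+\frac{b}{a}(e^{at}-1)$; for $a=0$, $\phi(t,x)=x+bt$, matching the limit $a\to 0$ of $(e^{at}-1)/a$. The main obstacle is the blow-up step: one has to be careful with the case analysis, since for $k$ even $f(u)$ has a definite sign at $\pm\infty$ and blow-up occurs in a specific time direction, whereas for $k$ odd and $\ge 3$ one must pick the sign of the initial condition appropriately to force the trajectory to leave every compact set in finite time.
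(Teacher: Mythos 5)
The paper does not actually prove this lemma: it is imported verbatim as Proposition~4.2 of Bass and Meisters \cite{bass1985polynomial}, so there is no in-paper proof to compare against. Your blind argument is essentially correct and self-contained, but it is worth noting that the cleanest branch of your own outline makes half of it redundant. The semigroup identity $\phi(t+s,\cdot)=\phi(t,\cdot)\circ\phi(s,\cdot)$ together with invertibility gives $\deg_x\phi(t,\cdot)\cdot\deg_x\phi(-t,\cdot)=\deg_x(\mathrm{id})=1$, hence $\deg_x\phi(t,\cdot)=1$ for \emph{every} $t$; writing $\phi(t,x)=c_1(t)x+c_0(t)$ with $c_0(t)=\phi(t,0)$ and $c_1(t)=\phi(t,1)-\phi(t,0)$ differentiable in $t$, one reads off $f(x)=\partial_t\phi(0,x)=c_1'(0)x+c_0'(0)$ directly, and the entire finite-time blow-up step for $\deg f\ge 2$ becomes unnecessary. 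By contrast, your alternative route via ``local interpolation at $N+1$ distinct points'' is circular as stated: recovering the coefficients $c_k(t)$ by interpolation presupposes a uniform bound on $\deg_x\phi(t,\cdot)$ near $t=0$, which is exactly what needs proving; a pointwise limit of polynomials of unbounded degree need not be a polynomial, so without the semigroup argument (or some substitute) the claim ``$f$ is a polynomial'' is not yet justified. The blow-up argument itself is sound (for a polynomial $f$ of degree at least two, initial data beyond the largest real root escape to infinity in finite forward or backward time, contradicting completeness of the flow on $\R\times\R$), and the final integration of the affine ODE is routine and matches the stated formula, including the $a=0$ limit.
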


Lemma \ref{lem:pol_flow} is crucial for the proof of Theorem \ref{th:univ_approx_1}.

\begin{proof}[Proof of Theorem \ref{th:univ_approx_1}]
    We consider the time$-1$ flow map $\phi_\lambda:\R^d\to\R^d$, $\lambda\in\R$, of the differential equation
    \begin{equation}\label{eq:snODE}
        \dot{u}(t) = \sigma( u(t) + \lambda e), \quad u(t)\in\R^d,\ t\in[0,1],
    \end{equation}
    with $\sigma:\R\to\R$ satisfying Assumption \ref{ass:1} and $e\in\R^d$ the vector of all ones. Assumption \ref{ass:1} guarantees that $\sigma$ is Lipschitz continuous and thus the existence and uniqueness of the solution of \eqref{eq:snODE}. This is a particular case of neural ODE \eqref{eq:nODE} with $A\in\R^{d\times d}$ the identity matrix and $b\in\R^d$ a vector with coinciding entries. Written componentwise, \eqref{eq:snODE} turns into the $d$ scalar uncoupled ODEs
    \[
        \dot{u}_i(t) = \sigma( u_i(t) + \lambda), \quad u_i(t)\in\R,\ i=1,\dots,d,\ t\in[0,1].
    \]
    Therefore all the components of the flow map $\phi_\lambda:\R^d\to\R^d$ coincide, i.e. $\phi_\lambda = ((\phi_\lambda)_i)_{1\le i \le d}$, with $(\phi_\lambda)_i=(\phi_\lambda)_j$ for all $i\ne j$.

    We define the space
    \[
        \mathcal{F}_{d,0} = \{ \phi_\lambda:\R^d\to\R^d \ : \ \lambda\in\R \}\subset\mathcal{F}_d
    \]
    of flow maps $\phi_\lambda:\R^d\to\R^d$ of the differential equations \eqref{eq:snODE}, and the space
    \begin{equation*}
        \mathcal{H}_0 = \{ \ell_2 \circ \phi \circ \ell_1 \ : \ d\in\N,\ \ell_1\in\Ac(\R^m,\R^d),\ \phi\in\mathcal{F}_{d,0},\ \ell_2\in\Ac(\R^d,\R^n) \}\subset\mathcal{H}.
    \end{equation*}
    A generic function $\varphi$ of $\mathcal{H}_0$ has the form
    \[
        \varphi(x) = \ell_2 \circ \phi_\lambda \circ \ell_1(x) = A_2 \phi_\lambda ( A_1 x + b_1 ) + b_2, \quad x\in\R^m,
    \]
    with $\ell_1(u) = A_1 u + b_1$, $\lambda\in\R$, and $\ell_2(u) = A_2 u + b_2$. Since $\phi_\lambda$ has components all equal, it can be interpreted as a scalar univariate activation function applied entrywise, that is the time--1 flow map of the scalar differential equation
    \[
        \dot{u}(t) = \sigma( u(t) + \lambda), \quad u(t)\in\R,\ t\in[0,1].
    \]
    Therefore, since $\phi_\lambda:\R\to\R$ is not a polynomial for Lemma \ref{lem:pol_flow}, Lemma \ref{lem:univ_approx} holds and $\mathcal{H}_0$ is a universal approximator for $\cC(\R^m,\R^n)$ under the compact convergence topology.

    Finally, since $\mathcal{H}_0\subset\mathcal{H}$, it follows for transitivity that $\mathcal{H}$ is a universal approximator for $\cC(\R^m,\R^n)$ under the compact convergence topology.
\end{proof}

\begin{remark}
    Notice that in the proof of Theorem \ref{th:univ_approx_1}, Assumption \ref{ass:1} can be relaxed, and one may simply require that $\sigma$ is Lipschitz continuous.
\end{remark}

\subsection{Numerical experiments on the approximation efficiency}
Theorem \ref{th:univ_approx_1} shows the universal approximation property of shallow neural networks whose activation function is defined as the flow map of neural ODE \eqref{eq:nODE}. To the best of our knowledge, such activation functions have not been considered so far.

The proof of Theorem \ref{th:univ_approx_1} does not leverage that $\phi\in\mathcal{F}_d$ is a flow map, and treats it as a scalar activation function applied entrywise. Despite this, it is evident that a flow map is able to express much more complicated maps than a scalar activation which does not mix the components of the input vector, and this is confirmed by a set of numerical experiments we now describe. We fix $\sigma=\mathrm{LeakyReLU}$ and consider the three following architectures:
\begin{enumerate}
    \item a network with a single hidden layer, i.e. $\ell_2\circ \sigma \circ \ell_1$, with $\ell_1\in\mathcal{A}(\mathbb{R}^m,\mathbb{R}^d),\,\ell_2\in\mathcal{A}(\mathbb{R}^d,\mathbb{R}^n)$;
    \item a neural ODE $\ell_2\circ\phi\circ\ell_1$, where $\phi$ is the time--$1$ flow map of $\dot{u}=\sigma(Au+b)$, $A\in\mathbb{R}^{d\times d},\,b\in\mathbb{R}^d$;
    \item a network with two hidden layers and the same number of parameters as the neural ODE, i.e. a map of the form $\ell_3\circ \sigma \circ \ell_2 \circ \sigma \circ \ell_1$, where $\ell_1\in\mathcal{A}(\mathbb{R}^m,\mathbb{R}^d),\,\ell_2\in\mathcal{A}(\mathbb{R}^d,\mathbb{R}^d),\,\ell_3\in\mathcal{A}(\mathbb{R}^d,\mathbb{R}^n)$.
\end{enumerate}
The neural ODE is discretised in time with the explicit Euler method and step size $1/20$. We compare these models on the task of approximating the scalar function $f(x)=\sin(10x)+x$, for $x\in [0,1]$. To test the efficiency of these models, we consider the training points $N$ in the set $\{10, 50, 100, 500, 1000\}$, and the number of hidden neurons $d$ in $\{5, 10, 50, 100\}$. We train all the models for $1000$ epochs with the Adam optimiser, the mean-squared error loss function, and a cyclic learning rate scheduling based on cosine annealing. We test all the models on a fixed set of $1000$ test points to have unbiased experiments. Furthermore, once a pair $(N,d)$ is fixed, we generate a single training set for all three models.
\begin{figure}
    \centering
    \includegraphics[width=.8\textwidth]{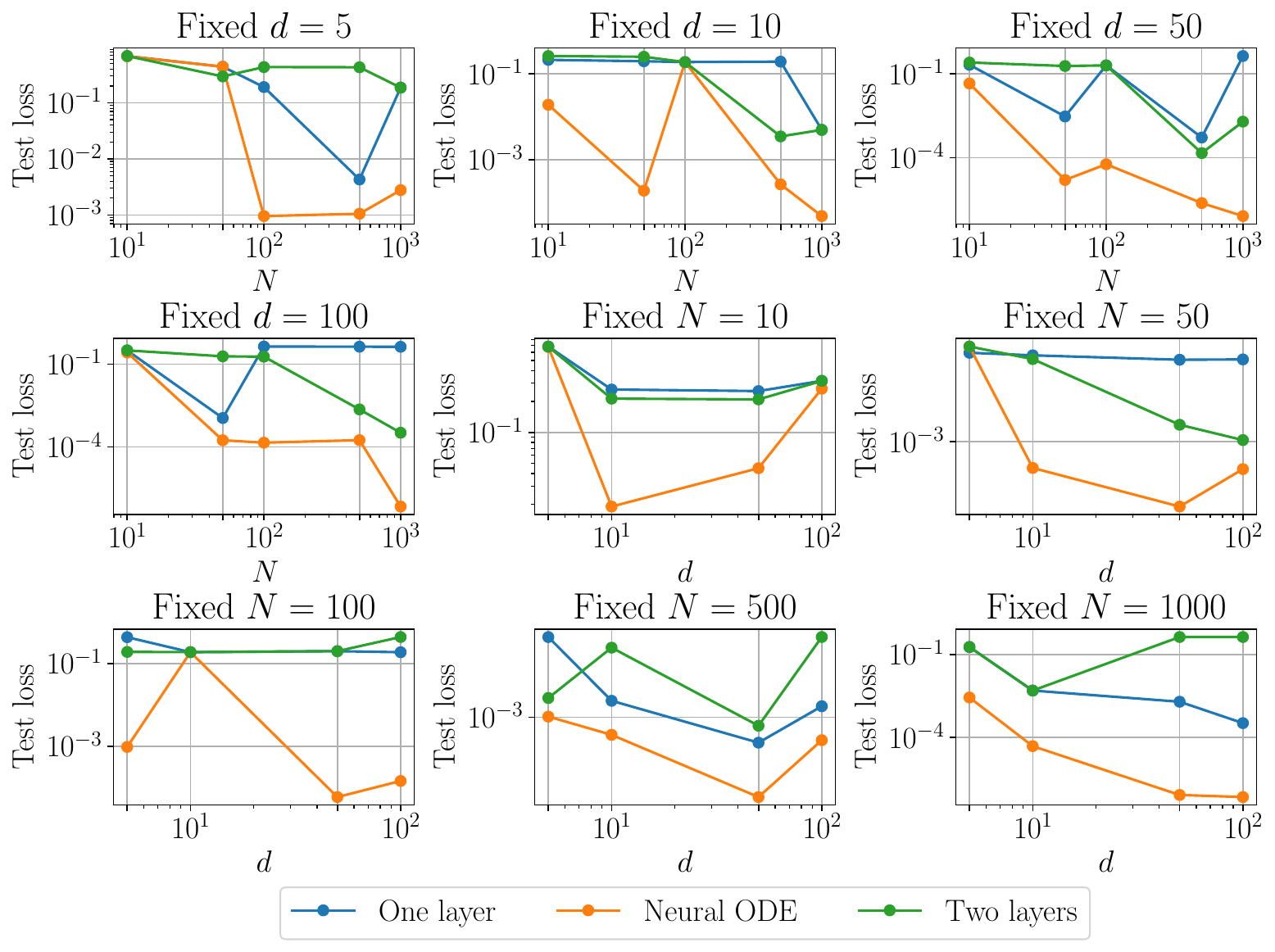}

    \caption{Comparison of the test mean squared error with varying numbers of training samples $(N)$ and hidden neurons $(d)$.}
    \label{fig:efficiencyNODE}
\end{figure}
The experiments in Figure \ref{fig:efficiencyNODE} show that the neural ODE tends to be more efficient than the other two models with respect to the number of training samples needed to train to a good accuracy. It is also apparent that the neural ODE is better at leveraging the additional parameters introduced by increasing the number of hidden neurons than the third model. This experimental analysis suggests that one could leverage the expressive power of flow maps to get better approximation rates for neural ODEs than with shallow neural networks of the considered form.

\subsection{Remarks on stability}
Classical activation functions are 1--Lipschitz, while flow maps of neural ODEs \eqref{eq:nODE} are Lipschitz functions but their Lipschitz constant depends on the weight matrix $A\in\R^{d\times d}$. In particular, in \cite{celledoni2021structure,de2025stability,guglielmi2024contractivity} it has been shown that the Lipschitz constant $L$ of the flow map $\phi:\R^d\to\R^d$ of the neural ODE \eqref{eq:nODE} is bounded by
\begin{equation}\label{eq:bound1}
    L \le \exp {\max_{D\in\Omega_\alpha} \mu_2(DA)},
\end{equation}
where $\alpha$ is defined in Assumption \ref{ass:1} and 
\begin{equation}\label{eq:omega}
    \Omega_\alpha = \{ D\in\R^{d\times d} \ : \ D \text{ is diagonal and } \alpha \le D_{ii} \le 1, \ \forall i = 1, \dots, d \}.
\end{equation}
The bound on the Lipschitz constant \eqref{eq:bound1} allows us to state the following stability bound for the shallow neural network \eqref{eq:snn}: if $x_1,x_2\in\R^m$ are two inputs, then
\begin{align}\label{eq:bound2}
    \begin{split}
    \|\varphi(x_1)-\varphi(x_2)\|_2 &\le \|A_2\|_2 L \|A_1\|_2 \|x_1-x_2\|_2 \\
                                    &\le \|A_1\|_2 \|A_2\|_2 \exp {\max_{D\in\Omega_\alpha} \mu_2(DA)} \|x_1-x_2\|_2,
    \end{split}
\end{align}
i.e. a perturbation in input is amplified at most by a factor $\|A_1\|_2 \|A_2\|_2 \exp {\max_{D\in\Omega_\alpha} \mu_2(DA)}$ in output. This information is very useful in light of the fragility of neural networks to adversarial attacks \cite{biggio2013evasion,goodfellow2014explaining,szegedy2013intriguing}, perturbations in input designed to mislead the neural network into making incorrect predictions, and it suggests an approach to improve the stability of the model \eqref{eq:snn}.

If we set $\|A_1\|_2\ge1$ a fixed constant and $\|A_2\|_2=1$, then the only term that controls the stability of the shallow neural network \eqref{eq:snn} is $\|A_1\|_2\exp {\max_{D\in\Omega_\alpha} \mu_2(DA)}$. In \cite{guglielmi2024contractivity}, an efficient numerical method has been proposed to compute $\max_{D\in\Omega_\alpha} \mu_2(DA)$, and therefore $\exp {\max_{D\in\Omega_\alpha} \mu_2(DA)}$. We denote
\[
    \delta_\star = \max_{D\in\Omega_\alpha} \mu_2(DA).
\]
For shallow neural networks as in \eqref{eq:snn} trained for classification tasks, we notice in numerical experiments that unconstrained neural ODEs have $\delta_\star>0$, and thus they are intrinsically expansive and potentially unstable. For example, we train a shallow neural network $\varphi$ of the kind \eqref{eq:snn}, with $\|A_1\|_2\ge1$ a fixed constant and $\|A_2\|_2=1$, to correctly classify the images in the MNIST dataset. The model $\varphi$ has constant $\delta_\star\approx4.65$. See \cite[Section 5]{de2025stability} for more details on this numerical experiment. Having a constant $\delta_\star\approx4.65$ implies that the Lipschitz constant of the shallow neural network is at most $e^{\delta_\star}\|A_1\|_2\approx104.6\|A_1\|_2$, which is very high and allows some perturbations in input to be greatly amplified in output. Table \ref{tab:MNIST} shows the accuracy\footnote{The accuracy is the percentage of correctly classified images in the test set.} of the shallow neural network $\varphi$ as a function of the magnitude $\eta>0$ of the perturbation introduced in input. The considered perturbation is the FGSM (Fast Gradient Sign Method) adversarial attack \cite{biggio2013evasion,goodfellow2014explaining,szegedy2013intriguing}.
\begin{table}[ht]
    \centering
    \begin{tabular}{c c c c c c c c}
        \toprule
        $\eta$ & 0 & 0.02 & 0.04 & 0.06 & 0.08 & 0.10 & 0.12 \\
        \midrule
        $\varphi$ & 0.9767 & 0.9205 & 0.7470 & 0.4954 & 0.2872 & 0.1426 & 0.0620 \\
        \bottomrule
    \end{tabular}
    \caption{Accuracy of the shallow neural network $\varphi$ as a function of the magnitude $\eta$ of the FGSM adversarial attack.}
    \label{tab:MNIST}
\end{table}
We notice that the accuracy decays quickly as the magnitude $\eta$ of the perturbation increases.

Therefore, in \cite{de2025stability}, a numerical approach has been proposed to stabilise the shallow neural network $\eqref{eq:snn}$. It is sketched as follows. For a fixed $\delta<\delta_\star$, the numerical method computes one of the perturbations $\Delta\in\R^{d\times d}$ with smallest Frobenius norm such that
\[
    \max_{D\in\Omega_\alpha} \mu_2(D(A+\Delta)) = \delta.
\]
We write \lq\lq one of the perturbations\rq\rq\ because the underlying optimisation problem is not convex, see \cite[Remark 5.1]{guglielmi2024contractivity}. We have the following scenarios. The shallow neural network \eqref{eq:snn} is
\begin{itemize}
    \item slightly expansive if $\|A_1\|_2e^{\delta_\star}>1$ but not large, i.e. $\delta_\star>\log\frac{1}{\|A_1\|_2}$ but not large;
    \item nonexpansive if $\|A_1\|_2e^{\delta_\star}=1$, i.e. $\delta_\star=\log\frac{1}{\|A_1\|_2}$;
    \item contractive if $\|A_1\|_2e^{\delta_\star}<1$, i.e. $\delta_\star<\log\frac{1}{\|A_1\|_2}$.
\end{itemize}

Therefore, choosing a $\delta<\delta_\star$ and applying the numerical method proposed in \cite{de2025stability} to get $\max_{D\in\Omega_\alpha} \mu_2(D(A+\Delta)) = \delta$ allows us to get a shallow neural network of the kind \eqref{eq:snn} that is at most moderately expansive, i.e. stable. In particular, if
\begin{equation*}
    \dot{u}(t) = \sigma( (A+\Delta) u(t) + b ), \quad t\in[0,1],
\end{equation*}
is the stabilised neural ODE and $\bar{\phi}$ is its time--1 flow map, then the stabilised shallow neural network is
\begin{equation}\label{eq:ssnn}
    \bar{\varphi}(x) = A_2 \bar{\phi} (A_1 x + b_1) + b_2, \quad x\in\R^m,
\end{equation}
with the same notations of \eqref{eq:snn} and \eqref{eq:nODE}.

Nevertheless, enforcing stability or contractivity can degrade model accuracy. In classification tasks, in particular, requiring contractivity may significantly reduce the fraction of correctly classified test samples. This trade-off is inevitable: overly accurate models tend to be unstable \cite{gottschling2020troublesome}, while strongly stable models may lack expressiveness. Therefore it is important to quantify the loss of expressivity resulting from the inclusion of the perturbation $\Delta$ in the model \eqref{eq:snn} to control the Lipschitz constant of the flow map $\phi$ and from the constraint $\|A_1\|_2\ge1$ a fixed constant and $\|A_2\|_2=1$. This motivates the next sections.

\section{Approximation bounds}\label{sec:approx_bounds}
In this section, we investigate the approximation properties of neural networks with a constrained Lipschitz constant. We focus on the flow map $\phi$. However, to ensure its restricted expansivity is not compensated by the increased expansivity in the linear layers, we limit the norm of the matrix $A_1$ to a fixed constant larger than or equal to 1 and of the matrix $A_2$ to one. We introduce the following space.
\begin{definition}
    Fix $p, q \in \N$ and $c\ge1$. We define the space of fixed norm affine transformations from $\R^p$ to $\R^q$ as
    \[
        \Ac_c(\R^p,\R^q) = \{x\in\R^p \to A x + b \in \R^q \ : \ A\in\R^{q\times p},\ \|A\|_2=c,\ b\in\R^{q} \}.
    \]
\end{definition}
We then focus on the space of neural networks defined by
\[
    \mathcal{H}_{c,1} = \{ \ell_2 \circ \phi \circ \ell_1 \ : \ d\in\N,\ \ell_1\in\Ac_c(\R^m,\R^d),\ \phi\in\mathcal{F}_d,\ \ell_2\in\Ac_1(\R^d,\R^n) \}.
\]
We notice that $\mathcal{H}_{c,1}$ corresponds to $\mathcal{H}$ with $\ell_1\in\Ac_c(\R^m,\R^d)$ and $\ell_2\in\Ac_1(\R^d,\R^n)$ instead of $\ell_1\in\Ac(\R^m,\R^d)$ and $\ell_2\in\Ac(\R^d,\R^n)$.

We recall that the Lipschitz constant $L_\phi$ of the flow map $\phi$ of neural ODE \eqref{eq:nODE} $\dot{u}=\sigma(Au+b)$ satisfies the bound
\[
    L_\phi \le e^{\delta_\star} \quad \text{with} \quad \delta_\star = \max_{D\in\Omega_\alpha} \mu_2(DA).
\]
Furthermore, if needed, fixed $\delta < \delta_\star$, it is possible to compute one of the perturbation matrices $\Delta$ of minimal Frobenius norm such that $\max_{D\in\Omega_\alpha} \mu_2(D(A+\Delta)) = \delta$ (see \cite{de2025stability}, \cite[Remark 5.1]{guglielmi2024contractivity}), i.e. the Lipschitz constant $L_{\bar{\phi}}$ of the flow map $\bar{\phi}$ of neural ODE $\dot{u}=\sigma(\bar{A}u+b)$, with $\bar{A}=A+\Delta$, is such that
\[
    L_{\bar{\phi}} \le e^{\delta} \quad \text{with} \quad \delta = \max_{D\in\Omega_\alpha} \mu_2(D(A+\Delta)).
\]
Without constraints of any kind, the Lipschitz constant $L_\varphi$ of the shallow neural network \eqref{eq:snn} $\varphi(x) = A_2 \phi (A_1 x + b_1) + b_2$ satisfies the bound
\[
    L_\varphi \le \|A_2\|_2 L_\phi \|A_1\|_2 \le \|A_1\|_2 \|A_2\|_2 e^{\delta_\star}.
\]
If we impose the Lipschitz constant of the flow map of the neural ODE \eqref{eq:nODE} to be smaller than or equal to $e^{\delta}$ for a certain $\delta\in\R$, and we restrict the norm of the matrix $A_1$ to a fixed constant larger than or equal to 1 and of the matrix $A_2$ to one, then we lose the universal approximation property. For example, if we aim to approximate a function $f\in \cC(\R^m,\R^n)$ that is $10-$Lipschitz, say $f(x)=10x$, with a shallow neural network \eqref{eq:snn} with Lipschitz constant bounded by 5, we fail to approximate it arbitrarily well.

Nevertheless, if we define the approximation error on a compact subset $K$ of $\R^m$ between a continuous function $f\in \cC(\R^m,\R^n)$ and a shallow neural network $\psi$, see \eqref{eq:snn}, as
\begin{equation} \label{eq:approx_err}
    \|f-\psi\|_{\infty,K} := \max_{x\in K} \|f(x)-\psi(x)\|_2,
\end{equation}
then we can quantify how much the approximation accuracy degrades because of the perturbation matrix $\Delta$ by computing an upper bound to \eqref{eq:approx_err}, and how accurately at most a shallow neural network approximates a continuous function by computing a lower bound to \eqref{eq:approx_err}.

We need the following preliminary results. Their proofs are well-known but we write them for completeness.

\begin{lemma}\label{lem:gronwall}
    Given $a,b>0$, we consider the linear scalar differential inequality
    \[
        u'(t) \le a u(t) + b, \quad t\in[0,1],
    \]
    with $u:[0,1]\to\R$ and $u(0)=u_0$. Then
    \[
        u(t) \le e^{at}u_0 + \frac{b}{a}(e^{at}-1), \quad t\in[0,1].
    \]
\end{lemma}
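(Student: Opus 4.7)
My plan is to use the integrating factor method, which yields the bound directly as an equality-then-inequality chain and avoids invoking a separate Gr\"onwall-style comparison lemma.

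First I would rewrite the differential inequality in a form that isolates a total derivative. Multiplying both sides of $u'(t) \le a u(t) + b$ by the positive factor $e^{-at}$ and rearranging gives
\[
    e^{-at} u'(t) - a e^{-at} u(t) \;\le\; b\, e^{-at},
\]
and the left-hand side is exactly $\frac{d}{dt}\bigl(e^{-at} u(t)\bigr)$. So the inequality becomes
\[
    \frac{d}{dt}\bigl(e^{-at} u(t)\bigr) \;\le\; b\, e^{-at}.
\]

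Next I would integrate from $0$ to $t$, using $u(0) = u_0$, to obtain
\[
    e^{-at} u(t) - u_0 \;\le\; \int_0^t b\, e^{-as}\, ds \;=\; \frac{b}{a}\bigl(1 - e^{-at}\bigr).
\]
Finally, multiplying through by $e^{at}$ (positive) yields the desired
\[
    u(t) \;\le\; e^{at} u_0 + \frac{b}{a}\bigl(e^{at} - 1\bigr), \qquad t\in[0,1].
\]

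There is no real obstacle here: the argument is a one-line integrating factor calculation, and the hypotheses $a,b>0$ together with differentiability of $u$ are exactly what is needed to justify multiplication by $e^{-at}>0$ and integration of the inequality. If one wished to relax the regularity assumption on $u$ to absolute continuity, the same computation would go through with the fundamental theorem of calculus for absolutely continuous functions; and if $a$ could vanish one would recover the limiting form by interpreting $(e^{at}-1)/a$ as $t$, consistent with the convention already adopted in Lemma \ref{lem:pol_flow}.
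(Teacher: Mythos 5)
Your proof is correct and is essentially the same as the paper's: both use the integrating factor $e^{-at}$ (the paper phrases it as defining $z(s)=u(s)e^{-as}$ and bounding $z'$), integrate from $0$ to $t$, and multiply back by $e^{at}$. No substantive difference.
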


\begin{proof}
    We start by defining the function
    \[
        z(s) := u(s)e^{-as}, \quad s\in[0,1].
    \]
    Differentiating $z(s)$ with respect to $s$, we get
    \[
        z'(s) = u'(s)e^{-as} - ae^{-as}u(s) \le (au(s)+b)e^{-as} - ae^{-as}u(s) = be^{-as}.
    \]
    Integrating both sides from 0 to $t\in[s,1]$, we obtain
    \[
        z(t) - z(0) \le \int_0^t b e^{-as} \text{d}s.
    \]
    Substituting the definition of $z(t)$ back into this inequality and solving the integral, we have
    \[
        u(t)e^{-at} - u(0) \le -\frac{b}{a} (e^{-at}-1).
    \]
    Eventually, multiplying both sides by $e^{at}$, we get
    \[
        u(t) \le e^{at}u_0 + \frac{b}{a}(e^{at}-1), \quad t\in[0,1],
    \]
    which concludes the proof.
\end{proof}

\begin{remark}
    An analogous version of Lemma \ref{lem:gronwall} holds with $\ge$ instead of $\le$.
\end{remark}

\begin{lemma}\label{lem:svd}
    If $A\in\R^{m\times n}$, with $m,n\in\N$, and $x\in\R^n$, then
    \[
        \sigma_{\min}(A) \|x\|_2 \le \|Ax\|_2 \le \sigma_{\max}(A)\|x\|_2,
    \]
    where, from here on, $\sigma_{\max}(\cdot)$ and $\sigma_{\min}(\cdot)$ denote the largest and the smallest singular values of a matrix respectively.
\end{lemma}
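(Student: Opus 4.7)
The plan is to use the singular value decomposition (SVD) together with the orthogonal invariance of the Euclidean norm. Write $A = U\Sigma V^\top$ with $U\in\R^{m\times m}$ and $V\in\R^{n\times n}$ orthogonal, and $\Sigma\in\R^{m\times n}$ containing the singular values $\sigma_1 \ge \sigma_2 \ge \dots \ge \sigma_r \ge 0$ on its diagonal, where $r=\min\{m,n\}$. Since $U$ is orthogonal, $\|Ax\|_2 = \|U\Sigma V^\top x\|_2 = \|\Sigma V^\top x\|_2$. Setting $y = V^\top x$, orthogonality of $V$ also gives $\|y\|_2 = \|x\|_2$, so the task reduces to bounding $\|\Sigma y\|_2$ in terms of $\|y\|_2$.

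Next, I would expand componentwise:
\[
    \|\Sigma y\|_2^2 = \sum_{i=1}^{r} \sigma_i^2 y_i^2.
\]
The upper bound follows from $\sigma_i \le \sigma_{\max}(A)$ for all $i$, yielding
\[
    \sum_{i=1}^{r} \sigma_i^2 y_i^2 \le \sigma_{\max}(A)^2 \sum_{i=1}^{r} y_i^2 \le \sigma_{\max}(A)^2 \|y\|_2^2.
\]
For the lower bound, one has to be slightly careful about the convention for $\sigma_{\min}$. If $m \ge n$, so that $r = n$, then every component of $y$ contributes and $\sigma_i \ge \sigma_{\min}(A)$ gives
\[
    \sum_{i=1}^{n} \sigma_i^2 y_i^2 \ge \sigma_{\min}(A)^2 \|y\|_2^2,
\]
yielding the claim. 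If $m<n$, then $\sigma_{\min}(A)$ is typically understood as $\sigma_r$ with $r=m$, and the inequality $\|Ax\|_2 \ge \sigma_{\min}(A)\|x\|_2$ holds only when $x$ lies in the orthogonal complement of $\ker A$; in the context in which this lemma is applied in the paper, the relevant matrix is $A_1$ with $d\ge m$ and full column rank, so the lower bound is the meaningful one. Taking square roots throughout then delivers the stated two-sided inequality.

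I do not expect any real obstacle here: the proof is essentially a one-line consequence of the SVD and the orthogonal invariance of $\|\cdot\|_2$. The only mildly delicate point is fixing the convention for $\sigma_{\min}$ so that the lower bound is stated correctly; given how the lemma is subsequently used, the tall or square case is the relevant one and the argument above suffices.
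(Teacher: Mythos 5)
Your proof is correct and follows essentially the same route as the paper's: singular value decomposition, orthogonal invariance of the Euclidean norm, and a componentwise bound on $\|\Sigma y\|_2^2=\sum_i\sigma_i^2y_i^2$. You are in fact slightly more careful than the paper, whose own chain of equalities $\sqrt{\sum_{i=1}^{\min(m,n)}(v_i^\top x)^2}=\|V^\top x\|_2=\|x\|_2$ silently assumes $m\ge n$; as you observe, the lower bound genuinely fails for wide matrices when $x$ has a component in $\ker A$, so the caveat you raise applies equally to the paper's argument (and is relevant there, since the lemma is later invoked for the wide matrix $A_2\in\R^{n\times d}$).
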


\begin{proof}
    By noticing that $\sigma_{\max}(A)=\|A\|_2$, the upper bound is obvious, so we focus on the lower bound. Let $A = U \Sigma V^\top$ be the singular value decomposition of the matrix $A$, with $U\in\R^{m\times m}$ a unitary matrix, $\Sigma\in\R^{m\times n}$ a diagonal matrix whose elements are the singular values $\sigma_1>\sigma_2>\dots>\sigma_{\min(m,n)}$ of $A$, and $V\in\R^{n\times n}$ is a unitary matrix. If $V=[v_1|v_2|\dots|v_n]$, with $v_i\in\R^n$ for $i=1,\dots,n$, then
    \begin{align*}
        \|Ax\|_2 &= \|U \Sigma V^\top x\|_2 = \|\Sigma V^\top x\|_2 = \sqrt{\sum_{i=1}^{\min(m,n)} \sigma_i^2 (v_i^\top x)^2} \\
        &\ge \sigma_{\min}(A) \sqrt{\sum_{i=1}^{\min(m,n)} (v_i^\top x)^2} = \sigma_{\min}(A) \|V^\top x\|_2 = \sigma_{\min}(A) \|x\|_2,
    \end{align*}
    which concludes the proof.
\end{proof}

\begin{lemma}\label{lem:l2n}
    If $A\in\R^{n\times n}$, with $n\in\N$, and $x\in\R^n$, then
    \[
        -\mu_2(-A) \|x\|_2 \le \langle x , A x \rangle_2 \le \mu_2(A) \|x\|_2.
    \]
\end{lemma}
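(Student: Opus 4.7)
The plan is to reduce the quadratic form $\langle x, Ax \rangle_2$ to one involving only the symmetric part of $A$ and then apply the Rayleigh quotient characterisation. (I note that the statement as displayed appears to have a typographical slip: the correct bounds involve $\|x\|_2^2$ rather than $\|x\|_2$, since $\langle x, Ax\rangle_2$ is quadratic in $x$. I will prove the homogeneous version.)

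First I would decompose $A$ into its symmetric and skew-symmetric parts,
\[
    A = S + K, \qquad S = \tfrac{1}{2}(A+A^\top), \quad K = \tfrac{1}{2}(A-A^\top).
\]
Since $K^\top = -K$, one has $\langle x, Kx\rangle_2 = \langle K^\top x, x\rangle_2 = -\langle x, Kx\rangle_2$, hence $\langle x, Kx\rangle_2 = 0$. Therefore
\[
    \langle x, Ax\rangle_2 = \langle x, Sx\rangle_2.
\]

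Next I would use the fact that $S$ is real symmetric, so it admits an orthonormal eigenbasis with real eigenvalues $\lambda_{\min}(S) \le \dots \le \lambda_{\max}(S)$. The Rayleigh quotient bound then immediately gives
\[
    \lambda_{\min}(S)\,\|x\|_2^2 \;\le\; \langle x, Sx\rangle_2 \;\le\; \lambda_{\max}(S)\,\|x\|_2^2.
\]

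Finally I would translate these eigenvalue bounds into logarithmic-norm language using the remark after Definition \ref{def:log_norm}, which identifies $\mu_2(A)$ with the largest eigenvalue of $\tfrac{1}{2}(A+A^\top) = S$. Thus $\lambda_{\max}(S) = \mu_2(A)$, and applying the same identity to $-A$ yields $\mu_2(-A) = \lambda_{\max}(-S) = -\lambda_{\min}(S)$, i.e. $\lambda_{\min}(S) = -\mu_2(-A)$. Substituting these two identifications into the Rayleigh bound delivers the claimed double inequality. There is no real obstacle here; the only small observation that drives the argument is that the skew-symmetric part of $A$ contributes nothing to the quadratic form, after which everything reduces to standard spectral theory for symmetric matrices.
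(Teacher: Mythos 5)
Your proof is correct and follows essentially the same route as the paper's: reduce $\langle x, Ax\rangle_2$ to the quadratic form of the symmetric part $\tfrac{1}{2}(A+A^\top)$ and bound it spectrally, identifying $\lambda_{\max}$ with $\mu_2(A)$ and $\lambda_{\min}$ with $-\mu_2(-A)$. Your observation about the typographical slip is also right: the paper's own proof ends with bounds proportional to $\|x\|_2^2$, so the factor $\|x\|_2$ in the displayed statement should indeed be $\|x\|_2^2$.
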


\begin{proof}
    By definition of inner product $\langle \cdot , \cdot \rangle_2$,
    \[
        \langle x , A x \rangle_2 = x^\top A x = \frac{1}{2} ( x^\top A^\top x + x^\top A x ) = x^\top \frac{A+A^\top}{2} x =: x^\top \text{sym}(A) x = \langle x , \text{sym}(A) x \rangle_2.
    \]
    If $\lambda_1,\lambda_2,\dots,\lambda_n$ are the eigenvalues of $\text{sym}(A)$ with $u_1,u_2,\dots,u_n\in\R^n$ corresponding unit eigenvectors, then we can write
    \[
        x = \sum_{i=1}^n \alpha_i u_i, \quad \alpha_i\in\R, \qquad \text{and} \qquad \|x\|_2^2 = \sum_{i=1}^n \alpha_i^2.
    \]
    If $\delta_{ij}$ is the Kronecker delta, then
    \begin{align*}
        \langle x , A x \rangle_2 &= \langle \sum_{i=1}^n \alpha_i u_i , \text{sym}(A) \sum_{j=1}^n \alpha_j u_j \rangle_2 = \sum_{i,j=1}^n \alpha_i \alpha_j \langle u_i , \text{sym}(A) u_j \rangle_2 = \sum_{i,j=1}^n \alpha_i \alpha_j \lambda_j \langle u_i , u_j \rangle_2 \\
        &= \sum_{i,j=1}^n \delta_{ij} \alpha_i \alpha_j \lambda_j = \sum_{i=1}^n \lambda_i \alpha_i^2.
    \end{align*}
    We define
    \[
        \lambda_{\max}(\text{sym}(A)) = \max_{i=1,\dots,n} \lambda_i \quad \text{and} \quad \lambda_{\min}(\text{sym}(A)) = \min_{i=1,\dots,n} \lambda_i.
    \]
    Therefore, by definition \ref{def:log_norm},
    \[
        \langle x , A x \rangle_2 \le \lambda_{\max}(\text{sym}(A)) \sum_{i=1}^n \alpha_i^2 =  \mu_2(A) \|x\|_2^2,
    \]
    and
    \[
        \langle x , A x \rangle_2 \ge \lambda_{\min}(\text{sym}(A)) \sum_{i=1}^n \alpha_i^2 =  - \mu_2(-A) \|x\|_2^2,
    \]
    which concludes the proof.
\end{proof}

Furthermore, we consider the time$-t$ flow maps $\phi_t$ and $\bar{\phi}_t$, $t\in [0,1]$, where $\phi_1=\phi$ and $\bar{\phi}_1=\bar{\phi}$. To simplify the notation, we define $z(t):=\phi_t(A_1x+b_1)$ and $\bar{z}(t):=\bar{\phi}_t(A_1x+b_1)$.
\begin{itemize}
    \item $z:[0,1]\to\R^d$ is the solution of the initial value problem
        \begin{equation}\label{eq:orig_ode}
            \begin{cases}
                \dot{z}(t) = \sigma (A z(t) + b), \quad t\in[0,1], \\
                z(0) = A_1x+b_1,
            \end{cases}
        \end{equation}
    \item $\bar{z}:[0,1]\to\R^d$ is the solution of the initial value problem
        \begin{equation}\label{eq:pert_ode}
            \begin{cases}
                \dot{\bar{z}}(t) = \sigma (\bar{A} \bar{z}(t) + b), \quad t\in[0,1], \\
                \bar{z}(0) = A_1x+b_1.
            \end{cases}
        \end{equation}
\end{itemize}
We are ready to state the main results of this section.

\subsection{Approximation upper bound}
Some preliminary definitions and notations follow. For a shallow neural network $\varphi\in\mathcal{H}_{c,1}$, we define $\delta_\star := \max_{D\in\Omega_\alpha} \mu_2(DA)$ and, for a fixed $\delta<\delta_\star$, we recall that there exists a perturbation matrix $\Delta$ such that
\[
    \max_{D\in\Omega_\alpha} \mu_2(D(A+\Delta)) = \delta.
\]
We then define the perturbed matrix $\bar{A}$ and the stabilised shallow neural network $\bar{\varphi}$:
\begin{enumerate}
    \item $\bar{A} = A + \Delta$,
    \item $\bar{\varphi}(x) = A_2\bar{\phi}(A_1x+b_1)+b_2, \quad x\in\R^m$.
\end{enumerate}

\begin{theorem} \label{th:upp_bound}
    Given a function $f\in \cC(\R^m,\R^n)$, a compact subset $K$ of $\R^m$, and a shallow neural network $\varphi\in\mathcal{H}_{c,1}$ such that
    \[
        \|f-\varphi\|_{\infty,K} \le \varepsilon
    \]
    for some $\varepsilon>0$, there exist $C>0$ such that
    \[
        \|f-\bar{\varphi}\|_{\infty,K} \le C \sigma_{\max}(\Delta) \frac{e^{\delta} - 1}{\delta} + \varepsilon.
    \]
\end{theorem}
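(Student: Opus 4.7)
The plan is to apply the triangle inequality
\[
    \|f - \bar{\varphi}\|_{\infty, K} \le \|f - \varphi\|_{\infty, K} + \|\varphi - \bar{\varphi}\|_{\infty, K} \le \varepsilon + \|\varphi - \bar{\varphi}\|_{\infty, K},
\]
so the task reduces to bounding $\|\varphi - \bar{\varphi}\|_{\infty, K}$ by $C\,\sigma_{\max}(\Delta)\,(e^\delta-1)/\delta$. Since $\varphi$ and $\bar{\varphi}$ share the same affine layers and $\|A_2\|_2 = 1$, the difference further satisfies $\|\varphi(x) - \bar{\varphi}(x)\|_2 \le \|z(1) - \bar{z}(1)\|_2$, where $z$ and $\bar{z}$ solve \eqref{eq:orig_ode} and \eqref{eq:pert_ode} with identical initial data $A_1 x + b_1$.

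Setting $w(t) := z(t) - \bar{z}(t)$, I would first linearise the right-hand sides by applying the integral mean value theorem entrywise to $\sigma$: since $\sigma'\in[\alpha,1]$ almost everywhere by Assumption \ref{ass:1}, there is a diagonal matrix $D(t) \in \Omega_\alpha$ such that $\sigma(Az(t)+b) - \sigma(\bar{A}\bar{z}(t)+b) = D(t)\bigl(Az(t) - \bar{A}\bar{z}(t)\bigr)$. Using $A = \bar{A} - \Delta$ this yields the linear inhomogeneous equation
\[
    \dot{w}(t) = D(t)\bar{A}\,w(t) - D(t)\Delta\,z(t), \qquad w(0) = 0.
\]
The variation-of-constants formula then gives $w(1) = -\int_0^1 \Phi(1, s) D(s) \Delta z(s)\,ds$, where $\Phi(\cdot,\cdot)$ is the fundamental matrix of $\dot{y} = D(t)\bar{A}y$. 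Differentiating $\|\Phi(t,s)x\|_2^2$ and invoking Lemma \ref{lem:l2n} together with the pointwise bound $\mu_2(D(\tau)\bar{A}) \le \max_{D \in \Omega_\alpha} \mu_2(D\bar{A}) = \delta$, I obtain $\|\Phi(1, s)\|_2 \le e^{\delta(1-s)}$. Combining this with $\|D(s)\|_2 \le 1$ and $\|\Delta\|_2 = \sigma_{\max}(\Delta)$, and computing $\int_0^1 e^{\delta(1-s)}\,ds = (e^\delta-1)/\delta$, I conclude
\[
    \|w(1)\|_2 \le \sigma_{\max}(\Delta)\,\Bigl(\sup_{s \in [0,1],\, x \in K} \|z(s)\|_2\Bigr)\,\frac{e^\delta - 1}{\delta}.
\]

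The remaining ingredient is the uniform bound $M := \sup_{s \in [0,1],\, x \in K}\|z(s)\|_2 < \infty$, which will serve as the constant $C$. I would obtain this by applying Lemma \ref{lem:gronwall} to the scalar inequality $\tfrac{d}{dt}\|z(t)\|_2 \le \|\dot z(t)\|_2 \le \|A\|_2\|z(t)\|_2 + \|b\|_2 + \sqrt{d}\,|\sigma(0)|$, which follows from entrywise $\sigma$ being globally $1$-Lipschitz under Assumption \ref{ass:1}, together with continuity of $x \mapsto \|A_1 x + b_1\|_2$ on the compact set $K$; this makes $M$ independent of $\Delta$ and $\delta$.

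The main subtlety I anticipate is justifying $\mu_2(D(\tau)\bar{A}) \le \delta$ uniformly in $\tau$: this requires the pointwise-in-$\tau$ membership $D(\tau) \in \Omega_\alpha$, which is why the integral mean value theorem must be applied component by component so that each diagonal entry of $D(\tau)$ is a convex combination of values of $\sigma'$ lying in $[\alpha, 1]$ and thus itself lies in $[\alpha, 1]$. Adopting the variation-of-constants formula, rather than a Gr\"onwall-style argument directly on $\|w(t)\|_2$, also conveniently avoids the non-differentiability of $\|w\|_2$ at $w(0) = 0$.
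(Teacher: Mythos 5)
Your proposal is correct and rests on the same core strategy as the paper's proof: the triangle inequality reduces the problem to bounding $\|\phi(A_1x+b_1)-\bar{\phi}(A_1x+b_1)\|_2$ (using $\|A_2\|_2=1$), and your key algebraic step --- writing $\sigma(Az+b)-\sigma(\bar{A}\bar{z}+b)=D\bar{A}(z-\bar{z})-D\Delta z$ with $D\in\Omega_\alpha$ --- is exactly the paper's identity \eqref{eq:geq1}. Where you genuinely diverge is in how the resulting perturbed linear equation is integrated: the paper bounds the one-sided derivative of $\|z-\bar{z}\|_2$ by $\delta\|z-\bar{z}\|_2+\sigma_{\max}(\Delta)M$ and invokes the scalar Gr\"onwall estimate of Lemma \ref{lem:gronwall}, whereas you use variation of constants for $w=z-\bar{z}$, bound the fundamental matrix by $\|\Phi(1,s)\|_2\le e^{\delta(1-s)}$ via the logarithmic norm, and compute $\int_0^1 e^{\delta(1-s)}\,ds=(e^{\delta}-1)/\delta$. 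Both routes produce the identical constant. Your variant buys three small refinements: the integral form of the mean value theorem is the cleaner way to obtain a diagonal matrix with entries in $[\alpha,1]$ when $\sigma$ is only differentiable almost everywhere (the paper's appeal to the Lagrange theorem for a vector-valued map should really be read componentwise, as you note); Duhamel sidesteps the non-differentiability of $\|w\|_2$ at $w(0)=0$; and you justify explicitly that $C=\sup_{x\in K,\,s\in[0,1]}\|z(s)\|_2$ is finite, which the paper leaves implicit. The price is a technicality the paper's pointwise argument avoids more easily, namely the measurability of $t\mapsto D(t)$ needed for the Carath\'eodory fundamental matrix to exist; this holds here but deserves a sentence if you keep the variation-of-constants route.
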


\begin{proof}
    We recall that
    \[
        \|f-\bar{\varphi}\|_{\infty,K} = \max_{x\in K} \|f(x)-\bar{\varphi}(x)\|_2.
    \]
    For a fixed $x\in K$, by the triangle inequality, we get
    \[
        \|f(x)-\bar{\varphi}(x)\|_2 \le \|f(x)-\varphi(x)\|_2 + \|\varphi(x)-\bar{\varphi}(x)\|_2 \le \|\varphi(x)-\bar{\varphi}(x)\|_2 + \varepsilon,
    \]
    so we need to estimate $\|\varphi(x)-\bar{\varphi}(x)\|_2$. By definition of $\varphi$ and $\bar{\varphi}$, we have that
    \begin{align}\label{eq:main_ineq}
        \begin{split}
        \|\varphi(x)-\bar{\varphi}(x)\|_2 &= \|A_2\phi(A_1x+b_1)+b_2-A_2\bar{\phi}(A_1x+b_1)-b_2\|_2 \\
        &\le \|A_2\|_2 \| \phi(A_1x+b_1)-\bar{\phi}(A_1x+b_1) \|_2 \\
        &=\| \phi(A_1x+b_1)-\bar{\phi}(A_1x+b_1) \|_2, 
        \end{split}
    \end{align}
    since $\|A_2\|_2=1$. Therefore, we need to estimate the difference of the flow maps of two differential equations from the same initial datum.
    
    We define $z(t)=\phi_t(A_1x+b_1)$ and $\bar{z}(t)=\bar{\phi}_t(A_1x+b_1)$ for all $t\in[0,1]$ as in \eqref{eq:orig_ode} and \eqref{eq:pert_ode}, with $\phi_1=\phi$ and $\bar{\phi}_1 = \bar{\phi}$. Then, for $t>0$, we compute
    \begin{align}\label{eq:imp_comp}
        \begin{split}
            \dv{}{t^+} \|z(t)-\bar{z}(t)\|_2 &= \lim_{h\to0^+} \frac{\|z(t+h)-\bar{z}(t+h)\|_2-\|z(t)-\bar{z}(t)\|_2}{h} \\
            &= \lim_{h\to0^+} \frac{\|z(t)+h\dot{z}(t)-\bar{z}(t)-h\dot{\bar{z}}(t)\|_2-\|z(t)-\bar{z}(t)\|_2}{h} \\
            &= \lim_{h\to0^+} \frac{\|z(t)-\bar{z}(t)+h(\dot{z}(t)-\dot{\bar{z}}(t))\|_2-\|z(t)-\bar{z}(t)\|_2}{h} \\
            &= \lim_{h\to0^+} \frac{\|z(t)-\bar{z}(t)+h(\sigma(Az(t)+b)-\sigma(\bar{A}\bar{z}(t)+b))\|_2-\|z(t)-\bar{z}(t)\|_2}{h},
        \end{split}
    \end{align}
    where in the second equality we have used the first order Taylor expansion of $z(t+h)$ and of $\bar{z}(t+h)$ and in the last equality we have used the expressions of the derivatives given by the differential equations \eqref{eq:orig_ode} and \eqref{eq:pert_ode}. We omit from here on the dependence on $t$, and we focus on the term $\sigma(Az+b)-\sigma(\bar{A}z+b)$. Lagrange Theorem ensures the existence of $\xi\in\R^d$, a point on the segment joining $Az+b$ and $\bar{A}\bar{z}+b$, such that
    \[
        \sigma(Az+b)-\sigma(\bar{A}\bar{z}+b) = J_\sigma(\xi)(Az+b-\bar{A}\bar{z}-b) = D(Az-\bar{A}\bar{z}),
    \]
    where $J_\sigma(\xi)$ is the Jacobian matrix of $\sigma$ evaluated at $\xi$, denoted by $D$. We notice that $D\in\Omega_\alpha$. Since $\bar{A}=A+\Delta$, we have that
    \begin{align}\label{eq:geq1}
        \begin{split}
            \sigma(Az+b)-\sigma(\bar{A}\bar{z}+b) &= D(Az-\bar{A}\bar{z}) = D(Az-(A+\Delta)\bar{z}) = DA(z-\bar{z}) - D\Delta \bar{z} \\
            &= DA(z-\bar{z}) + D\Delta (z-\bar{z}) - D\Delta z = D\bar{A}(z-\bar{z}) - D\Delta z.
        \end{split}
    \end{align}
    Therefore, using \eqref{eq:geq1}, Lemma \ref{lem:svd}, and the triangle inequality, we get that
    \begin{align*}
        \dv{}{t^+} \|z-\bar{z}\|_2 &= \lim_{h\to0^+} \frac{\|z-\bar{z}+h(D\bar{A}(z-\bar{z}) - D\Delta z)\|_2-\|z-\bar{z}\|_2}{h} \\
        &\le \lim_{h\to0^+} \frac{\|I+hD\bar{A}\|_2\|z-\bar{z}\|_2 + h\|D\|_2\|\Delta z\|_2-\|z-\bar{z}\|_2}{h} \\
        &\le \lim_{h\to0^+} \frac{\|I+hD\bar{A}\|_2-1}{h} \|z-\bar{z}\|_2 + \|D\|_2 \sigma_{\max}(\Delta) \|z\|_2.
    \end{align*}
    We recall that $\delta=\max_{D\in\Omega_\alpha}\mu_2(D\bar{A})$ and that $\|D\|_2 \le 1$ by definition $\eqref{eq:omega}$ of $\Omega_\alpha$, and we define $M(x)=\max_{t\in[0,1]}\|\phi_t(A_1x+b_1)\|_2$, with $z(t)=\phi_t(A_1x+b_1)$. Notice that $M$ depends on the fixed $x\in K$ and, when clear, we omit the argument $x$. Then
    \begin{align*}
        \dv{}{t^+} \|z-\bar{z}\|_2 &\le \lim_{h\to0^+} \frac{\|I+hD\bar{A}\|_2-1}{h} \|z-\bar{z}\|_2 + \|D\|_2 \sigma_{\max}(\Delta) \|z\|_2 \\
        &\le \mu_2(D\bar{A})\|z-\bar{z}\|_2 + \sigma_{\max}(\Delta) M \le \max_{D\in\Omega_\alpha} \mu_2(D\bar{A})\|z-\bar{z}\|_2 + \sigma_{\max}(\Delta) M \\
        &= \delta \|z-\bar{z}\|_2 + \sigma_{\max}(\Delta) M,
    \end{align*}
    with $\sigma_{\max}(\Delta)$ the largest singular value of $\Delta$. Applying Lemma \ref{lem:gronwall} with $u(t)=\|z(t)-\bar{z}(t)\|_2$, $t\in[0,1]$, yields
    \[
        \|z(t)-\bar{z}(t)\|_2 \le e^{\delta t} \|z(0)-\bar{z}(0)\|_2 + \frac{\sigma_{\max}(\Delta) M}{\delta}(e^{\delta t}-1) = \sigma_{\max}(\Delta)M\frac{e^{\delta t}-1}{\delta},
    \]
    since $z(0)=\bar{z}(0)$ (see \eqref{eq:orig_ode} and \eqref{eq:pert_ode}). In particular, if $t=1$, we have that
    \begin{equation}\label{eq:seco_ineq}
        \| \phi(A_1x+b_1)-\bar{\phi}(A_1x+b_1) \|_2 = \|z(1)-\bar{z}(1)\|_2 \le M \sigma_{\max}(\Delta) \frac{e^{\delta}-1}{\delta}.
    \end{equation}
    By plugging \eqref{eq:seco_ineq} into \eqref{eq:main_ineq}, we obtain that
    \[
        \|\varphi(x)-\bar{\varphi}(x)\|_2 \le M \sigma_{\max}(\Delta) \frac{e^{\delta}-1}{\delta}.
    \]
    If we define $C:=\max_{x\in K}M(x)$, then
    \[
        \|f-\bar{\varphi}\|_{\infty,K} = \max_{x\in K} \|f(x)-\bar{\varphi}(x)\|_2 \le C \sigma_{\max}(\Delta) \frac{e^{\delta}-1}{\delta} + \varepsilon,
    \]
    which concludes the proof.
\end{proof}

We notice that the proposed upper bound is pointwise in the space of functions, in the sense that it depends on the choice of $\varphi$. Specifically, given a function $f\in \cC(\R^m,\R^n)$ and a shallow neural network $\varphi\in\mathcal{H}_{c,1}$ that approximates within $\varepsilon$ the target function $f$, the upper bound depends on the value $\delta$ and the perturbation matrix $\Delta$.
\begin{itemize}
    \item $e^{\delta}$ is the upper bound to the Lipschitz constant of the stabilised shallow neural network $\bar{\varphi}$.
    \item $\Delta$ is the perturbation computed to define the stabilised shallow neural network $\bar{\varphi}$. The larger is $\sigma_{\max}(\Delta)$, the larger is the upper bound, and thus the worse the approximation error may be.
\end{itemize}
\begin{remark}
The result above does not necessarily imply that a Lipschitz-constrained neural network $\bar{\varphi}$ cannot accurately approximate any given target function. It simply tells us that if we start from a trained, unconstrained neural network and correct its weights as illustrated above, we can expect a controlled drop in accuracy.
\end{remark}
\begin{corollary}
    With the same notations and under the assumptions of Theorem \ref{th:upp_bound}, if $\delta=0$, i.e. $\bar{\varphi}$ is a nonexpansive shallow neural network, then the approximation upper bound becomes
    \[
        \|f-\bar{\varphi}\|_{\infty,K}  \le C \sigma_{\max}(\Delta) + \varepsilon.
    \]
\end{corollary}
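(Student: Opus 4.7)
The plan is to retrace the proof of Theorem \ref{th:upp_bound} and observe that the only step requiring care when $\delta=0$ is the application of Lemma \ref{lem:gronwall}, which is stated for $a>0$. Everything up to and including the derivation of the scalar differential inequality
\[
    \dv{}{t^+} \|z(t)-\bar{z}(t)\|_2 \;\le\; \delta\,\|z(t)-\bar{z}(t)\|_2 \;+\; \sigma_{\max}(\Delta)\,M
\]
goes through unchanged, since the triangle inequality and the logarithmic–norm estimate do not use positivity of $\delta$. The right-hand side of Theorem \ref{th:upp_bound} involves the factor $(e^{\delta}-1)/\delta$, which is smooth at $\delta=0$ with limit $1$, so the target bound is exactly what one expects by continuous extension.

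For a self-contained argument at $\delta=0$, I would substitute $\delta=0$ directly into the above differential inequality to get $\dv{}{t^+}\|z(t)-\bar{z}(t)\|_2 \le \sigma_{\max}(\Delta)\,M$, and then integrate from $0$ to $t\in[0,1]$, using $z(0)=\bar{z}(0)$ from \eqref{eq:orig_ode} and \eqref{eq:pert_ode}, to obtain $\|z(t)-\bar{z}(t)\|_2 \le \sigma_{\max}(\Delta)\,M\,t$. At $t=1$ this yields
\[
    \|\phi(A_1x+b_1)-\bar{\phi}(A_1x+b_1)\|_2 \;\le\; M\,\sigma_{\max}(\Delta).
\]
Alternatively, one can simply invoke the convention stated in Lemma \ref{lem:pol_flow} that $(e^{at}-1)/a$ is interpreted as $t$ when $a=0$, which is consistent with the limit $\lim_{\delta\to 0^+}(e^{\delta}-1)/\delta=1$.

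Plugging this estimate into \eqref{eq:main_ineq} and using $\|A_2\|_2=1$ gives $\|\varphi(x)-\bar{\varphi}(x)\|_2 \le M(x)\sigma_{\max}(\Delta)$ for every $x\in K$. Taking the maximum over $x\in K$ with $C:=\max_{x\in K}M(x)$, and combining with the triangle inequality $\|f(x)-\bar{\varphi}(x)\|_2\le \|f(x)-\varphi(x)\|_2 + \|\varphi(x)-\bar\varphi(x)\|_2 \le \varepsilon + C\sigma_{\max}(\Delta)$, yields the claimed bound $\|f-\bar\varphi\|_{\infty,K}\le C\sigma_{\max}(\Delta)+\varepsilon$. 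There is no real obstacle here; the only subtle point is the degenerate form of the Grönwall bound at $\delta=0$, which is resolved either by redoing the integration by hand or by a limit argument.
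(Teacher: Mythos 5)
Your proof is correct and matches the paper's (implicit) argument: the corollary is stated there as an immediate consequence of Theorem \ref{th:upp_bound}, obtained by interpreting the factor $(e^{\delta}-1)/\delta$ as its limit $1$ at $\delta=0$. Your direct integration of the differential inequality at $\delta=0$ properly handles the degenerate case of Lemma \ref{lem:gronwall} (which is stated only for $a>0$), a detail the paper leaves implicit.
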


We now move to the approximation lower bound.

\subsection{Approximation lower bound}

Some preliminaries follow. For a shallow neural network $\varphi\in\mathcal{H}_{c,1}$, we define $\delta_\star := \max_{D\in\Omega_\alpha} \mu_2(DA)$ and, for a fixed $\delta<\delta_\star$, we recall that there exists a perturbation matrix $\Delta$ such that
\[
    \max_{D\in\Omega_\alpha} \mu_2(D(A+\Delta)) = \delta.
\]
We then define the perturbed matrix $\bar{A}$ and the stabilised shallow neural network $\bar{\varphi}$:
\begin{enumerate}
    \item $\bar{A} = A + \Delta$,
    \item $\delta' = - \max_{D\in\Omega_\alpha} \mu_2(-D\bar{A})$,
    \item $\bar{\varphi}(x) = A_2\bar{\phi}(A_1x+b_1)+b_2, \quad x\in\R^m$.
\end{enumerate}

\begin{definition}
    If $u$ and $v$ are real vectors, the cosine of the angle $\theta(u,v)$ between the vectors $u$ and $v$ is defined as
    \[
        \cos\theta(u,v)=\frac{\langle u,v \rangle}{\|u\|_2\|v\|_2}.
    \]
\end{definition}

\begin{assumption}\label{ass:ass}
    Given a compact subset $K$ of $\R^m$ and $0<\bar{t}\ll1$, we assume for all $x\in K$ that
    \begin{equation*}
        \min_{D\in\Omega_\alpha,\ t\in[\bar{t},1]}\cos\theta(\phi_t(A_1x+b_1)-\bar{\phi}_t(A_1x+b_1),-D\Delta \phi_t(A_1x+b_1))>0,
    \end{equation*}
    and that the cosine is well defined, where all quantities in the inequality are defined and introduced in the beginning of this section.
\end{assumption}

\begin{theorem}
    We consider a function $f\in \cC(\R^m,\R^n)$, a compact subset $K$ of $\R^m$, and a shallow neural network $\varphi\in\mathcal{H}_{c,1}$ such that
    \[
        \|f-\varphi\|_{\infty,K} \le \varepsilon
    \]
    for some $\varepsilon>0$. Furthermore, we suppose that, for a fixed $0<\bar{t}\ll1$, Assumption \ref{ass:ass} holds. Then, there exists $c_1(\bar{t})>0$ and $c_2>0$ such that
    \[
        \|f-\bar{\varphi}\|_{\infty,K} \ge c_1(\bar{t})e^{\delta'(1-\bar{t})} + c_2 \sigma_{\min}(\Delta) \frac{e^{\delta'(1-\bar{t})} - 1}{\delta'} - \varepsilon.
    \]
\end{theorem}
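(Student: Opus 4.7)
The plan is to mirror the proof of Theorem~\ref{th:upp_bound} with every inequality reversed, replacing the upper bounds from Lemma~\ref{lem:svd} and the triangle inequality with the corresponding lower bounds supplied by Lemma~\ref{lem:l2n}, Lemma~\ref{lem:svd}, and the $\ge$ version of Lemma~\ref{lem:gronwall}. Starting from the reverse triangle inequality
\[
\|f-\bar{\varphi}\|_{\infty,K} \ge \max_{x\in K}\|\varphi(x)-\bar{\varphi}(x)\|_2 - \|f-\varphi\|_{\infty,K} \ge \max_{x\in K}\|\varphi(x)-\bar{\varphi}(x)\|_2 - \varepsilon,
\]
the task reduces to lower bounding $\|\varphi(x)-\bar{\varphi}(x)\|_2 = \|A_2(z(1)-\bar{z}(1))\|_2$ for a conveniently chosen $x\in K$. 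Lemma~\ref{lem:svd} applied to $A_2$ gives $\|\varphi(x)-\bar{\varphi}(x)\|_2 \ge \sigma_{\min}(A_2)\,\|z(1)-\bar{z}(1)\|_2$, so it suffices to lower bound the latter.

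Next, I repeat the directional-derivative computation \eqref{eq:imp_comp}, but now use the identity $\lim_{h\to 0^+}(\|u+hv\|_2-\|u\|_2)/h = \langle u,v\rangle/\|u\|_2$ (valid as long as $\|u\|_2>0$) in place of the subadditivity step, together with the same Lagrange-theorem decomposition $\sigma(Az+b)-\sigma(\bar{A}\bar{z}+b)=D\bar{A}(z-\bar{z})-D\Delta z$ for some $D\in\Omega_\alpha$. On any interval where $\|z-\bar{z}\|_2>0$ this produces
\[
\dv{}{t^+}\|z-\bar{z}\|_2 = \frac{\langle z-\bar{z},\,D\bar{A}(z-\bar{z})\rangle}{\|z-\bar{z}\|_2} + \frac{\langle z-\bar{z},\,-D\Delta z\rangle}{\|z-\bar{z}\|_2}.
\]
Lemma~\ref{lem:l2n} bounds the first summand below by $-\mu_2(-D\bar{A})\,\|z-\bar{z}\|_2$, which is in turn at least $\delta'\,\|z-\bar{z}\|_2$ by the definition of $\delta'$. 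The second summand equals $\|D\Delta z\|_2\cos\theta(z-\bar{z},-D\Delta z)$; on $[\bar{t},1]$ Assumption~\ref{ass:ass} bounds the cosine below by some $c>0$, while $\sigma_{\min}(D)\ge\alpha$ (since $D\in\Omega_\alpha$) and Lemma~\ref{lem:svd} give $\|D\Delta z\|_2 \ge \alpha\,\sigma_{\min}(\Delta)\,\|z\|_2 \ge \alpha\,\sigma_{\min}(\Delta)\,m_x$, where $m_x := \min_{t\in[\bar{t},1]}\|\phi_t(A_1 x + b_1)\|_2$. Combining these yields the linear differential inequality
\[
\dv{}{t^+}\|z-\bar{z}\|_2 \ge \delta'\,\|z-\bar{z}\|_2 + c\,\alpha\,\sigma_{\min}(\Delta)\,m_x \quad\text{on } [\bar{t},1].
\]

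Applying the $\ge$ variant of Lemma~\ref{lem:gronwall} on $[\bar{t},1]$ with initial value $\|z(\bar{t})-\bar{z}(\bar{t})\|_2$ and evaluating at $t=1$ gives
\[
\|z(1)-\bar{z}(1)\|_2 \ge e^{\delta'(1-\bar{t})}\,\|z(\bar{t})-\bar{z}(\bar{t})\|_2 + \frac{c\,\alpha\,\sigma_{\min}(\Delta)\,m_x}{\delta'}\bigl(e^{\delta'(1-\bar{t})}-1\bigr).
\]
Multiplying by $\sigma_{\min}(A_2)$, selecting any $x_0\in K$ for which the right-hand side is strictly positive, and plugging into the reverse triangle inequality above produces the claimed bound with $c_1(\bar{t}):=\sigma_{\min}(A_2)\,\|z(\bar{t};x_0)-\bar{z}(\bar{t};x_0)\|_2$ (whose $\bar{t}$-dependence enters through the flow) and $c_2:=\sigma_{\min}(A_2)\,c\,\alpha\,m_{x_0}$.

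The main obstacle is ensuring that each of these constants can be taken strictly positive: $\|z(\bar{t})-\bar{z}(\bar{t})\|_2>0$ for some $x_0$ (which should follow from $\Delta\ne 0$ together with a short-time expansion, so $\bar{t}>0$ is essential), $m_{x_0}>0$ (the flow not vanishing on $[\bar{t},1]$), $\sigma_{\min}(A_2)>0$ (so $A_2$ must have full row rank), and the geometric compatibility $c>0$ imposed by Assumption~\ref{ass:ass}. The last is really what distinguishes the lower-bound case from the upper-bound one: in Theorem~\ref{th:upp_bound} the forcing $-D\Delta z$ and the error direction $z-\bar{z}$ could be arbitrarily aligned because only $\|D\Delta z\|_2$ mattered, whereas here their alignment must be controlled in order to prevent cancellations that would destroy any lower bound on the growth of $\|z-\bar{z}\|_2$.
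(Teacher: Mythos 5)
Your proposal is correct and follows essentially the same route as the paper's proof: reverse triangle inequality, Lemma \ref{lem:svd} applied to $A_2$, the directional-derivative identity for $\|z-\bar z\|_2$ combined with the decomposition \eqref{eq:geq1}, Lemma \ref{lem:l2n}, Assumption \ref{ass:ass} to control the cosine, and the $\ge$ version of Lemma \ref{lem:gronwall} on $[\bar t,1]$; the only cosmetic difference is that you evaluate at a single favourable $x_0$ while the paper takes minima over $K$. The positivity issues you flag at the end are resolved in the paper exactly by the clause in Assumption \ref{ass:ass} that the cosine is well defined (so $z(t)-\bar z(t)$ and $D\Delta z(t)$ are nonzero on $[\bar t,1]$), together with compactness of $K$.
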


\begin{proof}
    We recall that
    \[
        \|f-\bar{\varphi}\|_{\infty,K} = \max_{x\in K} \|f(x)-\bar{\varphi}(x)\|_2.
    \]
    For a fixed $x\in K$, by the reverse triangle inequality, we get
    \begin{align*}
        \|f(x)-\bar{\varphi}(x)\|_2 &= \|f(x)-\varphi(x)+\varphi(x)-\bar{\varphi}(x)\|_2 = \|\varphi(x)-\bar{\varphi}(x) - (\varphi(x)-f(x))\|_2 \\
        &\ge | \|\varphi(x)-\bar{\varphi}(x)\|_2 - \|\varphi(x)-f(x)\|_2 | \ge \|\varphi(x)-\bar{\varphi}(x)\|_2 - \varepsilon,
    \end{align*}
    so we need to compute a lower bound for $\|\varphi(x)-\bar{\varphi}(x)\|_2$. Then, by definition of $\varphi$ and $\bar{\varphi}$, and using Lemma \ref{lem:svd}, we have that
    \begin{align}\label{eq:main_ineq_2}
        \begin{split}
        \|\varphi(x)-\bar{\varphi}(x)\|_2 &= \|A_2\phi(A_1x+b_1)+b_2-A_2\bar{\phi}(A_1x+b_1)-b_2\|_2 \\
        &= \|A_2(\phi(A_1x+b_1)-\bar{\phi}(A_1x+b_1))\|_2 \\
        &\ge \sigma_{\min}(A_2) \| \phi(A_1x+b_1)-\bar{\phi}(A_1x+b_1) \|_2,
        \end{split}
    \end{align}
    with $\sigma_{\min}(A_2)$ the smallest singular value of $A_2$. Therefore we need to compute a lower bound for the difference of the flow maps of two differential equations from the same initial datum.
    
    We define $z(t)=\phi_t(A_1x+b_1)$ and $\bar{z}(t)=\bar{\phi}_t(A_1x+b_1)$ for all $t\in[0,1]$ as in \eqref{eq:orig_ode} and \eqref{eq:pert_ode}, with $\phi_1=\phi$ and $\bar{\phi}_1 = \bar{\phi}$. By Assumption \ref{ass:ass}, it holds that $\|z(t)-\bar{z}(t)\|_2>0$ for all $t\in[\bar{t},1]$. Then, omitting the dependence on $t$ and using \eqref{eq:geq1} and Lemma \ref{lem:l2n}, we have that
    \begin{align*}
        \dv{}{t} \|z-\bar{z}\|_2 &= \frac{\langle z-\bar{z} , \dot{z}-\dot{\bar{z}} \rangle}{\|z-\bar{z}\|_2} = \frac{\langle z-\bar{z} , \sigma(Az+b)-\sigma(\bar{A}\bar{z}+b) \rangle}{\|z-\bar{z}\|_2} \\
        &= \frac{\langle z-\bar{z} , D\bar{A}(z-\bar{z})-D\Delta z \rangle}{\|z-\bar{z}\|_2} = \frac{\langle z-\bar{z} , D\bar{A}(z-\bar{z})\rangle}{\|z-\bar{z}\|_2} + \frac{\langle z-\bar{z} , - D\Delta z \rangle}{\|z-\bar{z}\|_2} \\
        &\ge -\mu_2(-D\bar{A}) \|z-\bar{z}\|_2 + \|D\Delta z\|_2 \cos\theta(z-\bar{z},-D\Delta z),
    \end{align*}
    where $\theta(z-\bar{z},-D\Delta z)$ denotes the angle between $z-\bar{z}$ and $-D\Delta z$. We recall that $\delta'=-\max_{D\in\Omega_\alpha}\mu_2(-D\bar{A})$ and we define $\eta(x):=\min_{D\in\Omega_\alpha,\ t\in[\bar{t},1]}\cos\theta(z(t)-\bar{z}(t),-D\Delta z(t))$. Notice that $\eta$ depends on the fixed $x\in K$ and, when clear, we omit the argument $x$. Thus, we get that
    \begin{equation}\label{eq:step}
        \dv{}{t} \|z-\bar{z}\|_2 \ge \delta' \|z-\bar{z}\|_2 + \|D\Delta z\|_2 \eta.
    \end{equation}
    Since $z(t)=\phi_t(A_1x+b_1)$ and $\bar{z}(t)=\bar{\phi}_t(A_1x+b_1)$, then $\eta>0$ by assumption \ref{ass:ass}. Thus, applying Lemma \ref{lem:svd} and using the fact that $D\in\Omega_\alpha$, we obtain that
    \[
        \|D\Delta z\|_2 = \sqrt{\sum_{i=1}^d D_{ii}^2 (\Delta z)_{ii}^2} \ge \alpha \sqrt{\sum_{i=1}^d (\Delta z)_{ii}^2} = \alpha \|\Delta z\|_2 \ge \alpha \sigma_{\min} (\Delta) \|z\|_2,
    \]
    and then
    \[
        \dv{}{t} \|z-\bar{z}\|_2 \ge \delta' \|z-\bar{z}\|_2 + \alpha \eta \sigma_{\min} (\Delta) m,
    \]
    where $m(x):=\min_{t\in[\bar{t},1]}\|\phi_t(A_1x+b_1)\|_2$, with $z(t)=\phi_t(A_1x+b_1)$. Notice that $m$ depends on the fixed $x\in K$ and, when clear, we omit the argument $x$. Applying Lemma \ref{lem:gronwall} with $u(t)=\|z(t)-\bar{z}(t)\|_2$, $t\in[\bar{t},1]$, $a:=\delta'$ and $b:=\alpha\eta\sigma_{\min}(\Delta)m$, yields
    \begin{align*}
        \|z(t)-\bar{z}(t)\|_2 &\ge e^{\delta'(t-\bar{t})} \|z(\bar{t})-\bar{z}(\bar{t})\|_2 + \frac{\alpha\eta\sigma_{\min}(\Delta)m}{\delta'}(e^{\delta' (t-\bar{t})}-1) \\
        &=e^{\delta' (t-\bar{t})} \|\phi_{\bar{t}}(A_1x+b_1)-\bar{\phi}_{\bar{t}}(A_1x+b_1)\|_2 + \alpha\eta\sigma_{\min}(\Delta)m\frac{e^{\delta' (t-\bar{t})}-1}{\delta'}.
    \end{align*}
    In particular, if $t=1$, we have that
    \begin{align}\label{eq:third_ineq}
        \begin{split}
            &\| \phi(A_1x+b_1)-\bar{\phi}(A_1x+b_1) \|_2 = \|z(1)-\bar{z}(1)\|_2 \\
            &\ge e^{\delta'(1-\bar{t})} \|\phi_{\bar{t}}(A_1x+b_1)-\bar{\phi}_{\bar{t}}(A_1x+b_1)\|_2 + \alpha\eta\sigma_{\min}(\Delta)m\frac{e^{\delta'(1-\bar{t})}-1}{\delta'}.
        \end{split}
    \end{align}
    By plugging \eqref{eq:third_ineq} into \eqref{eq:main_ineq_2}, we obtain that
    \begin{align*}
        \|\varphi(x)-\bar{\varphi}(x)\|_2 &\ge \sigma_{\min}(A_2) \|\phi_{\bar{t}}(A_1x+b_1)-\bar{\phi}_{\bar{t}}(A_1x+b_1)\|_2 e^{\delta' (1-\bar{t})} \\ &\quad+ \alpha\eta\sigma_{\min}(A_2)\sigma_{\min}(\Delta)m\frac{e^{\delta'(1-\bar{t})}-1}{\delta'}.
    \end{align*}
    If we define
    \begin{align*}
        c_1(\bar{t})&:=\sigma_{\min}(A_2)\min_{x\in K}\|\phi_{\bar{t}}(A_1x+b_1)-\bar{\phi}_{\bar{t}}(A_1x+b_1)\|_2, \\
        c_2&:=\alpha\sigma_{\min}(A_2)\left(\min_{x\in K}m(x)\right)\left(\min_{x\in K}\eta(x)\right),
    \end{align*}
    then
    \[
        \|f(x)-\bar{\varphi}(x)\|_2 \ge c_1(\bar{t}) e^{\delta' (1-\bar{t})} + c_2 \sigma_{\min}(\Delta)\frac{e^{\delta'(1-\bar{t})}-1}{\delta'} - \varepsilon,
    \]
    and so
    \[
        \|f-\bar{\varphi}\|_{\infty,K} \ge c_1(\bar{t}) e^{\delta'(1-\bar{t})} + c_2 \sigma_{\min}(\Delta)\frac{e^{\delta'(1-\bar{t})}-1}{\delta'} - \varepsilon,
    \]
    which concludes the proof.
\end{proof}

\begin{remark}
    How would the proof of the approximation lower bound change if
    \[
        \min_{D\in\Omega_\alpha,\ t\in[\bar{t},1]}\cos\theta(\phi_t(A_1x+b_1)-\bar{\phi}_t(A_1x+b_1),-D\Delta \phi_t(A_1x+b_1))\le0?
    \]
    Then
    \[
        \eta:=\min_{D\in\Omega_\alpha,\ t\in[\bar{t},1]}\cos\theta(z(t)-\bar{z}(t),-D\Delta z(t))\le0
    \]
    and, since,
    \[
        \|D\Delta z\|_2 = \sqrt{\sum_{i=1}^d D_{ii}^2 (\Delta z)_{ii}^2} \le \sqrt{\sum_{i=1}^d (\Delta z)_{ii}^2} = \|\Delta z\|_2 \le \sigma_{\max} (\Delta) \|z\|_2,
    \]
    it follows from Equation \eqref{eq:step} that
    \[
        \dv{}{t} \|z-\bar{z}\|_2 \ge \delta' \|z-\bar{z}\|_2 + \eta \sigma_{\max} (\Delta) M,
    \]
    where $M(x):=\max_{t\in[\bar{t},1]}\|\phi_t(A_1x+b_1)\|_2$. By following the same reasoning of the proof of the approximation lower bound, if we define $c_1(\bar{t}):=\sigma_{\min}(A_2)\min_{x\in K}\|\phi_{\bar{t}}(A_1x+b_1)-\bar{\phi}_{\bar{t}}(A_1x+b_1)\|_2$ and $c_2:=\sigma_{\min}(A_2)\left(\max_{x\in K}M(x)\right)\left(\min_{x\in K}\eta(x)\right)$, we obtain that
    \[
        \|f-\bar{\varphi}\|_{\infty,K} \ge c_1(\bar{t}) e^{\delta'(1-\bar{t})} + c_2 \sigma_{\max}(\Delta)\frac{e^{\delta'(1-\bar{t})}-1}{\delta'} - \varepsilon.
    \]
    However $c_2<0$ here, so the approximation lower bound could be negative and therefore useless.
\end{remark}

\begin{remark}
    The presented approximation bounds are valid if the shallow neural networks \eqref{eq:snn} and \eqref{eq:ssnn} have the same parameters $b,A_1,b_1,A_2,b_2$. However, back to the MNIST classification problem, if we further train the parameters $b,A_1,b_1,A_2,b_2$ of
    \[
        \bar{\varphi}(x) = A_2\bar{\phi}(A_1x+b_1)+b_2, \quad x\in\R^m,
    \]
    keeping the activation function $\bar{\phi}$ fixed and fulfilling the constraint $\|A_1\|_2\ge1$ a fixed constant and $\|A_2\|_2=1$, we obtain the shallow neural network
    \[
        \bar{\varphi}_\star(x) = \bar{A}_2 \bar{\phi} (\bar{A}_1 x + \bar{b}_1) + \bar{b}_2,
    \]
    with $\|\bar{A}_1\|_2>1$ a fixed constant and $\|\bar{A}_2\|_2=1$, that is more accurate than $\bar{\varphi}$. See \eqref{eq:snn}, \eqref{eq:nODE}, and \eqref{eq:ssnn} for the notations. Indeed, the two models have the same Lipschitz constant since $\bar{\phi}$ has not changed, but $\bar{\varphi}_\star(x)$ has better accuracy as shown in Table \ref{tab:acc2}. This indicates that the corrected neural ODE with flow map $\bar{\phi}$ is not necessarily the best neural ODE with Lipschitz constant $e^\delta$. This does not contradict the result presented above, but it suggests that the approximation bounds can be improved by allowing the parameters $b,A_1,b_1,A_2,b_2$ to change.
    \begin{table}[ht]
        \centering
        \begin{tabular}{ c c c c c c c c }
            \toprule
             & $\delta=-2$ & $\delta=-1$ & $\delta=0$ & $\delta=1$ & $\delta=2$ & $\delta=3$ & $\delta=4$ \\
            \midrule
            accuracy of $\bar{\varphi}$ & 0.1006 & 0.1093 & 0.5099 & 0.9274 & 0.9623 & 0.9736 & 0.9762 \\
            accuracy of $\bar{\varphi}_\star$ & 0.9029 & 0.9314 & 0.9683 & 0.9765 & 0.9767 & 0.9761 & 0.9768 \\
            \bottomrule
        \end{tabular}
        \caption{Accuracy of $\bar{\varphi}$ and $\bar{\varphi}_\star$ for different values of $\delta$.}
        \label{tab:acc2}
    \end{table}
\end{remark}

\begin{remark}
    Not only the shallow neural network $\bar{\varphi}_\star$ has accuracy approaching the accuracy of $\varphi$ as $\delta$ approaches $\delta_\star$, see Table \ref{tab:acc2}, but it is more stable as expected. Back to the MNIST classification problem, we observe the behaviour of the model $\bar{\varphi}_\star$ for different values of the parameter $\delta$. In Figure \ref{fig:comp1}, we plot the accuracy of $\bar{\varphi}_\star$ for different values of $\delta$ on the MNIST dataset as a function of the magnitude $\eta$ of the FGSM adversarial attack. As $\delta$ increases, the model $\bar{\varphi}_\star$ gains in accuracy, but it loses in robustness and stability. In contrast, as $\delta$ decreases, the model's robustness and stability improve, but its accuracy deteriorates. See \cite[Section 5]{de2025stability} for more details.

    \begin{figure}[ht]
        \centering
        \includegraphics[width=0.85\textwidth]{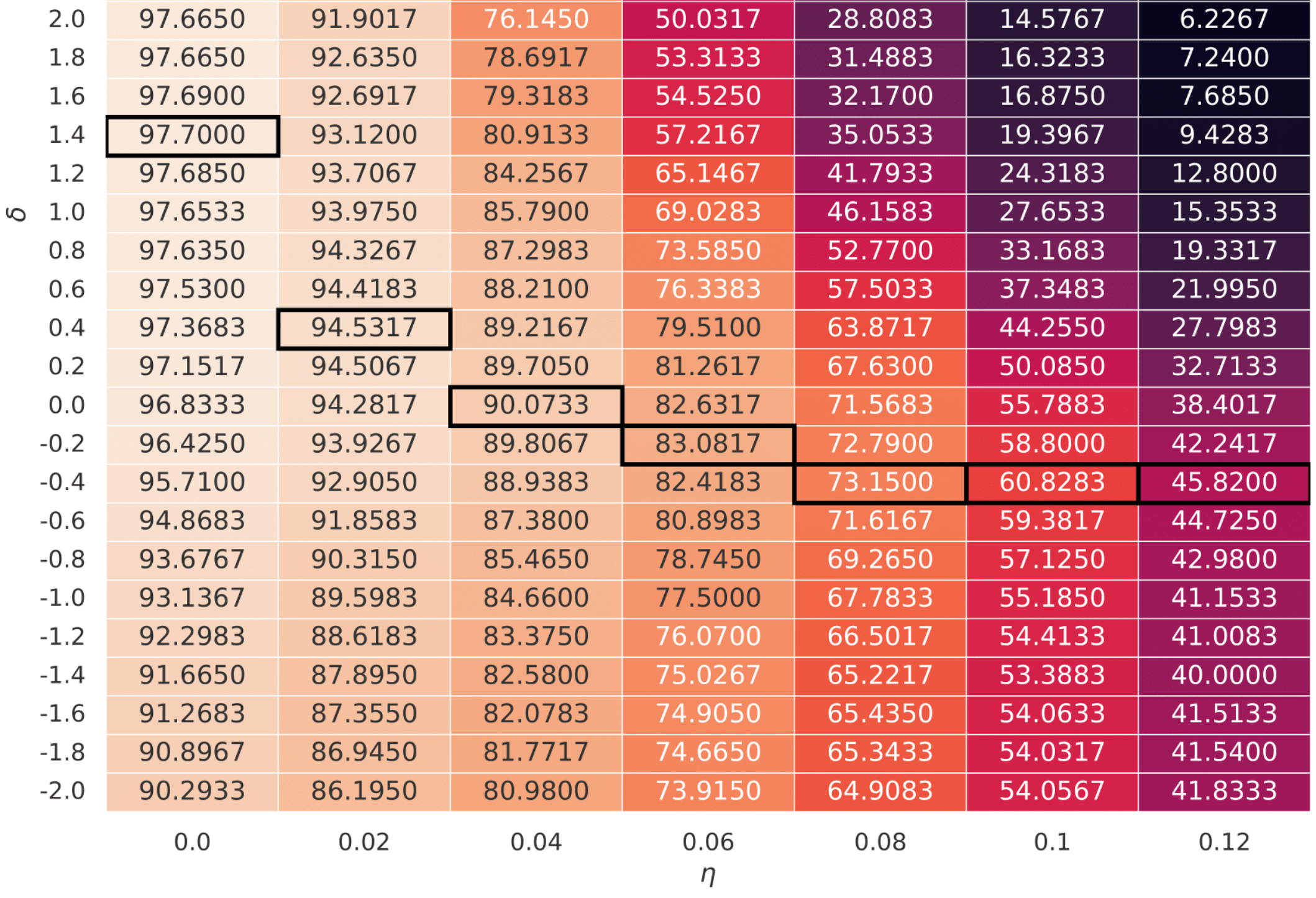}
        \caption{Accuracy of $\bar{\varphi}_\star$ for different values of $\delta$ as a function of the magnitude $\eta$ of the FGSM adversarial attack on the MNIST dataset. For each perturbation magnitude $\eta$, the best validation accuracy is highlighted.}
        \label{fig:comp1}
    \end{figure}
\end{remark}

\begin{remark}
    We notice that the approximation lower bound does not hold for any compact set $K$. Indeed, it holds for any compact set $K$ such that for all $x\in K$ Assumption \ref{ass:ass} holds, i.e.
    \[
        \min_{D\in\Omega_\alpha,\ t\in[\bar{t},1]}\cos\theta(\phi_t(A_1x+b_1)-\bar{\phi}_t(A_1x+b_1),-D\Delta \phi_t(A_1x+b_1))>0.
    \]
    On the other hand, given a compact set $K$, such assumption can be used to detect those regions of a compact set $K$ where Assumption \ref{ass:ass}, and therefore the lower bound holds. We clarify this with two examples.
\end{remark}

\subsection{Example 1}
We consider the function
\[
    f(x) = \phi(x), \quad x\in[-1,1]^2,
\]
where $\phi:[-1,1]^2\to\R^2$ is the flow map at time 1 of the neural ODE
\[
    \dot{u}(t) = \sigma(Au(t)+b), \quad t\in[0,1].
\]
We choose $\sigma:\R\to\R$ to be the LeakyReLU with minimal slope $\alpha=0.1$, i.e. $\sigma(x)=\max\{x,\alpha x\}$, and we randomly draw ten times the parameters $A\in\R^{2\times2}$ and $b\in\R^2$ from a uniform distribution on the interval $[0,1)$ and from a standard normal distribution. In this setting,
\[
    \Omega_\alpha = \left\{
    \begin{bmatrix}
        \alpha & 0 \\
             0 & \alpha
    \end{bmatrix},
    \begin{bmatrix}
        \alpha & 0 \\
             0 & 1
    \end{bmatrix},
    \begin{bmatrix}
             1 & 0 \\
             0 & \alpha
    \end{bmatrix},
    \begin{bmatrix}
             1 & 0 \\
             0 & 1
    \end{bmatrix}
    \right\},
\]
and, following the approach proposed in \cite{guglielmi2024contractivity}, we compute $\delta_\star = \max_{D\in\Omega_\alpha} \mu_2(DA)$ for each random choice of $A$. Next, given $\delta \in \{ \delta_\star - i \ : \ i\in\{0.01,\ 0.02,\ 0.03,\ 0.04,\ 0.05,\ 0.06,\ 0.07,\ 0.08,\ 0.09\} \}$, we compute the smallest (in Frobenius norm) perturbation matrix $\Delta\in\R^{2\times2}$ as in \cite{de2025stability} to each random choice of $A$ such that
\[
    \max_{D\in\Omega_\alpha} \mu_2(D(A+\Delta)) = \delta,
\]
and we denote by $\bar{\phi}:[-1,1]^2\to\R^2$ the flow map of perturbed neural ODE
\[
    \dot{u}(t) = \sigma((A+\Delta)u(t)+b), \quad t\in[0,1].
\]
We now provide an answer to the following question: where does the lower bound for the approximation error
\[
    \|f-\bar{\phi}\|_{\infty,[-1,1]^2} = \|\phi-\bar{\phi}\|_{\infty,[-1,1]^2}
\]
hold in the domain $[-1,1]^2$? It is sufficient to check where Assumption \ref{ass:ass} holds in $[-1,1]^2$. In this setting, Assumption \ref{ass:ass} is simply
\[
    \min_{D\in\Omega_\alpha,\ t\in[\bar{t},1]}\cos\theta(\phi_t(x)-\bar{\phi}_t(x),-D\Delta \phi_t(x))>0,
\]
since $A_1=I_2$ and $b_1=0$.

We discretise the domain $K=[-1,1]^2$ with constant stepsize $h=0.05$ in both directions and we call $K_h$ the discretised domain. Then, we fix $\bar{t}=0.3$, so that we are safely far from 0, and we introduce a discretisation $I_k$ of the time interval $I = [\bar{t},1]$ with stepsize $k=0.05$. We compute the flow maps $\phi_t(x)$ and $\bar{\phi}_t(x)$ numerically using the explicit Euler method with constant stepsize 0.05, and we compute for each $x \in K_h$
\[
    \eta(x) = \min_{D\in\Omega_\alpha,\ t\in I_k}\cos\theta(\phi_t(x)-\bar{\phi}_t(x),-D\Delta \phi_t(x)).
\]
If $\eta(x)$ is positive, then the lower bound holds, otherwise it does not. We have detected in this way those regions of the discretised domain $K_h$ where the lower bound holds. It is interesting to notice that, as $\delta$ decreases, the regions of the discretised domain $K_h$, where the lower bound holds, expand.
\begin{figure}[ht]
    \begin{subfigure}{0.5\textwidth}
        \centering
        \includegraphics[width=\textwidth]{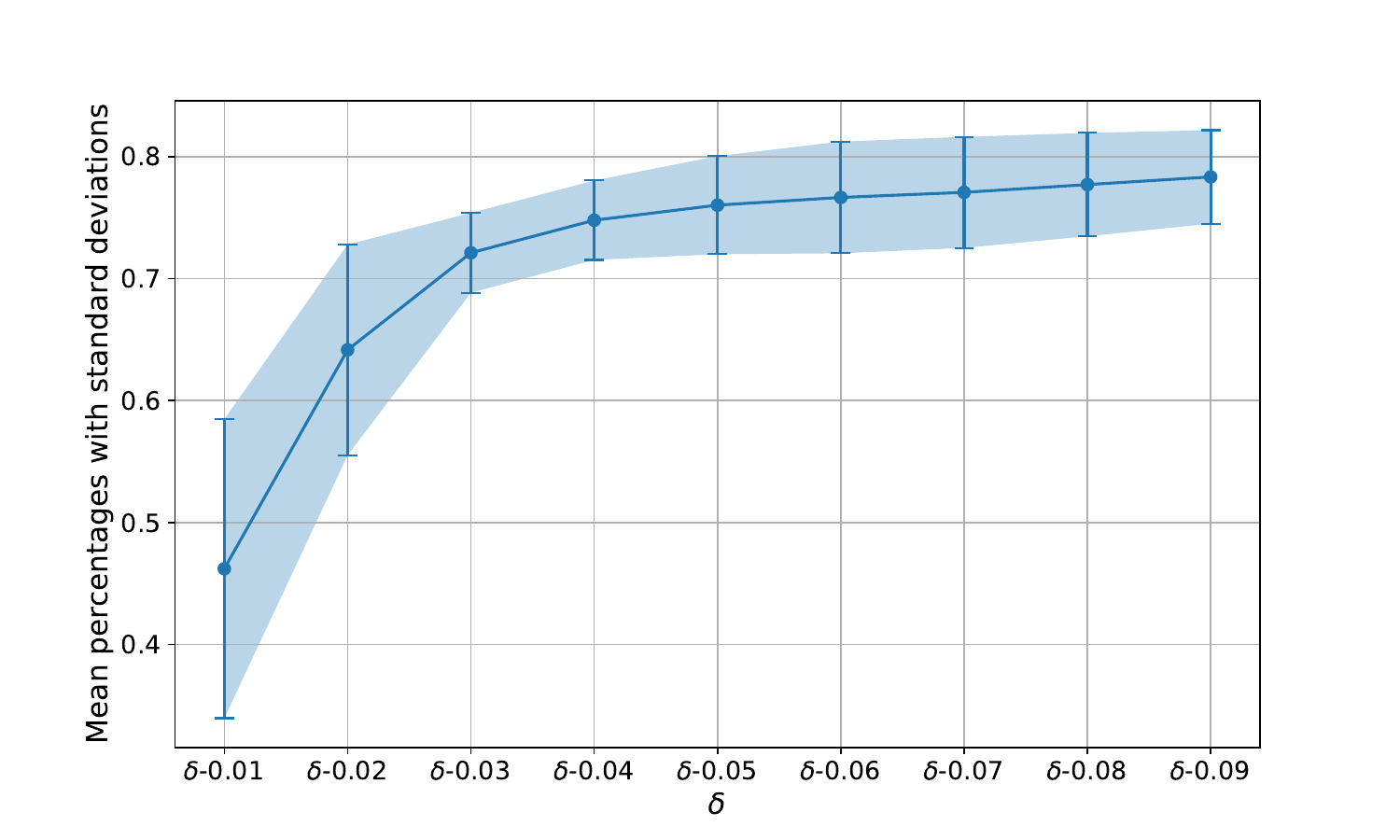}
        \caption{The parameters $A$ and $b$ are drawn from an uniform distribution on the interval $[0,1)$.}
        \label{fig:rand}
    \end{subfigure}
    \begin{subfigure}{0.5\textwidth}
        \centering
        \includegraphics[width=\textwidth]{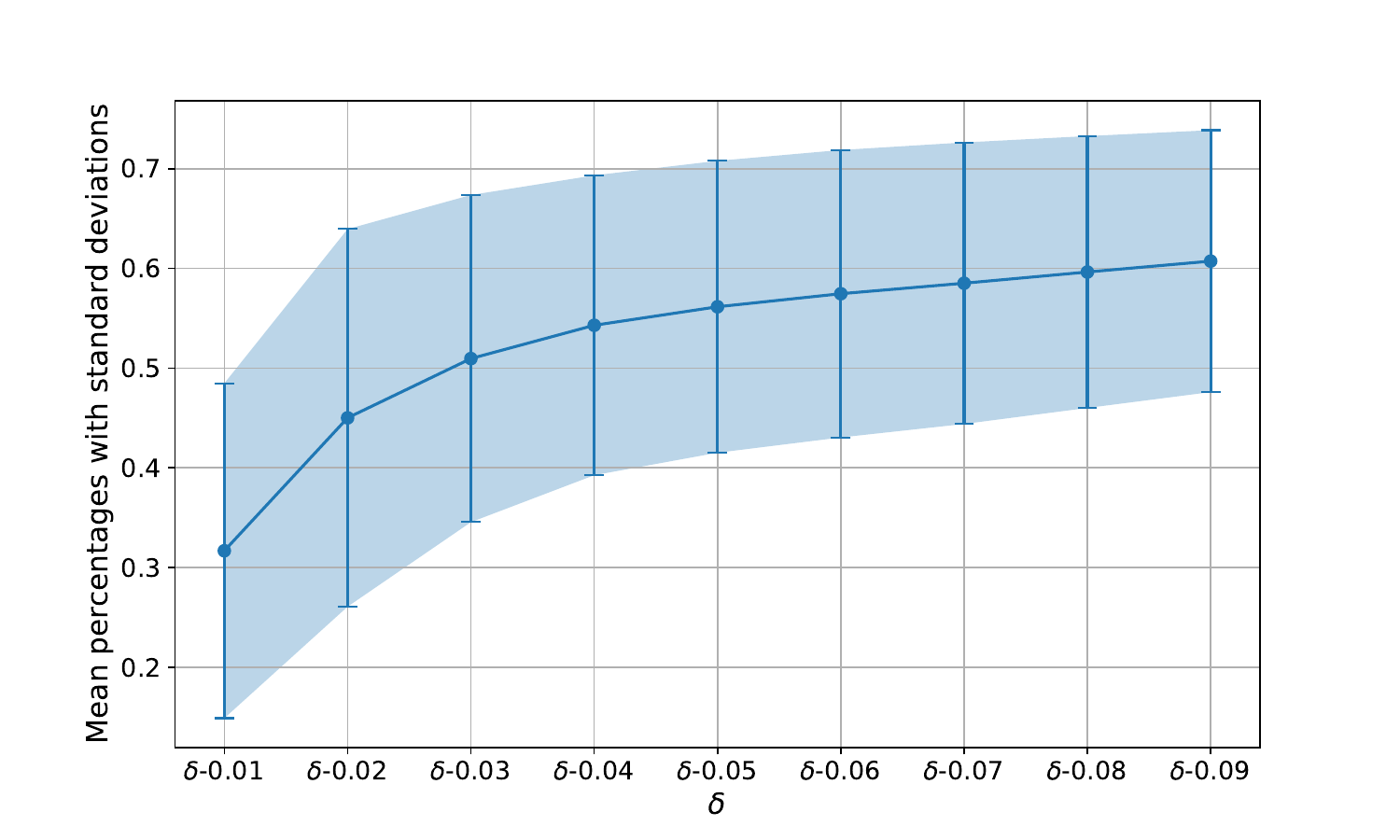}
        \caption{The parameters $A$ and $b$ are drawn from a standard normal distribution.}
        \label{fig:randn}
    \end{subfigure}
    \caption{Mean and standard deviation of the percentage of points of the discretised domain $K_h$, where the lower bound holds, with respect to the random choice of parameters $A$ and $b$ as a function of $\delta$.}
\end{figure}
This is reasonable since, as $\delta$ decreases, the functions $\phi$ and $\bar{\phi}$ differ more and more. Indeed, for each value of $\delta$, we compute the mean and the standard deviation of the percentage of points of the discretised domain $K_h$, where the lower bound holds, with respect to the random choice of parameters $A$ and $b$. In Figure \ref{fig:rand} we plot the mean and the standard deviation with respect to $\delta$ in the case the parameters $A$ and $b$ are drawn randomly from a uniform distribution on the interval $[0,1)$. In Figure \ref{fig:randn} we plot the same in the case the parameters $A$ and $b$ are drawn randomly from a standard normal distribution.

\subsection{Example 2}
We consider the shallow neural network
\[
    \varphi(x) = A_2 \phi( A_1 x + b_1 ) + b_2, \quad x\in\R^2,
\]
where $A_1\in\R^{4\times2}$, $b_1\in\R^4$, $A_2\in\R^{2\times4}$, $b_2\in\R^2$ and $\phi:\R^4\to\R^4$ is the flow map at time 1 of the neural ODE
\[
    \dot{u}(t) = \sigma(Au(t)+b), \quad t\in[0,1],
\]
with $A\in\R^{4\times4}$ and $b\in\R^4$. We choose $\sigma:\R\to\R$ to be the LeakyReLU with minimal slope $\alpha=0.1$, i.e. $\sigma(x)=\max\{x,\alpha x\}$, and we train $\varphi$ on the Two Moons dataset. The obtained accuracy is 100\%. See Figure \ref{fig:tmd}.
\begin{figure}[ht]
    \centering
    \includegraphics[width=0.5\textwidth]{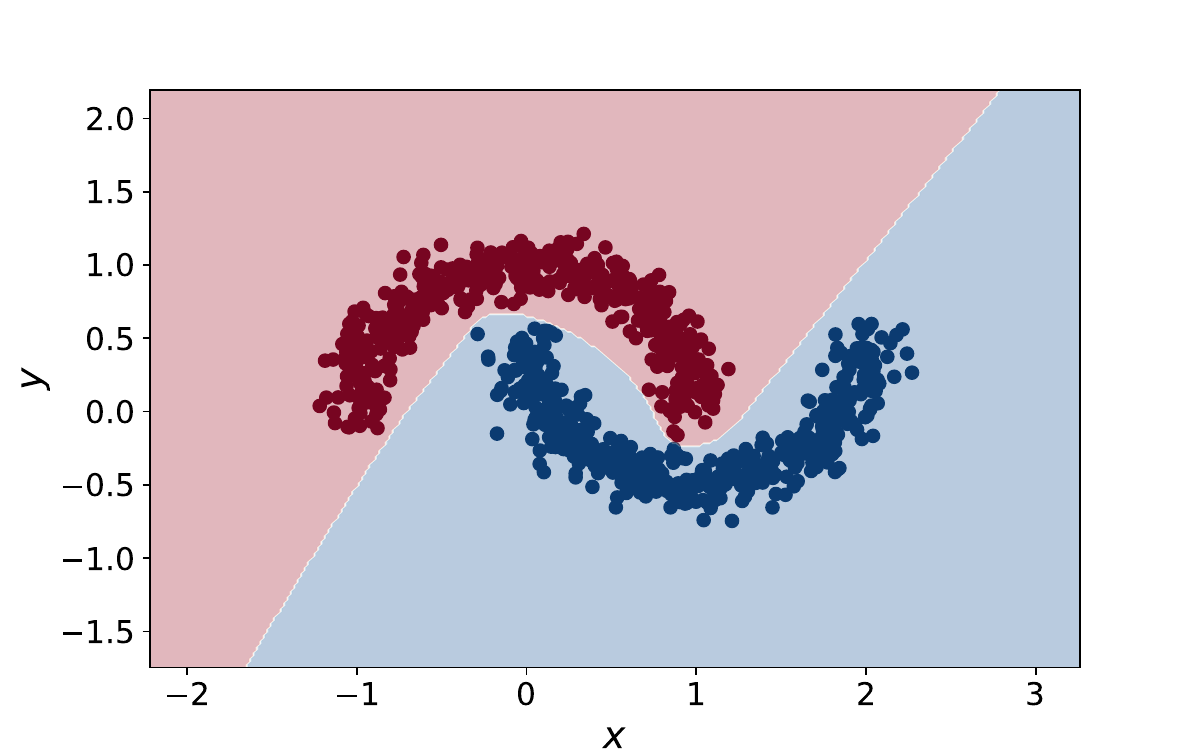}
    \caption{Two Moons dataset.}
    \label{fig:tmd}
\end{figure}
In this setting, $\Omega_\alpha$ has cardinality $2^4=16$ and, for the trained weight matrix $A$, following the approach proposed in \cite{guglielmi2024contractivity}, it turns out that (see Definition \ref{def:log_norm})
\[
    \delta_\star = \max_{D\in\Omega_\alpha} \mu_2(DA) = 8.4521.
\]
Given $\delta \in \{ \delta_\star - 0.006 ,\ \delta_\star - 0.004 ,\ \delta_\star - 0.002 \}$, we compute the smallest (in Frobenius norm) perturbation matrix $\Delta\in\R^{4\times4}$ as in \cite{de2025stability} such that
\[
    \max_{D\in\Omega_\alpha} \mu_2(D(A+\Delta)) = \delta,
\]
and we denote by $\bar{\phi}:\R^4\to\R^4$ the flow map of perturbed neural ODE
\[
    \dot{u}(t) = \sigma((A+\Delta)u(t)+b), \quad t\in[0,1],
\]
and by $\bar{\varphi}$ the stabilized shallow neural network
\[
    \bar{\varphi}(x) = A_2\bar{\phi}(A_1x+b_1)+b_2, \quad x\in\R^2.
\]
Here the compact subset $K\subset\R^2$ of interest is the Two Moons dataset itself, and it is already a discrete set. We now provide an answer to the following question: where does the lower bound for the approximation error
\[
    \|\varphi-\bar{\varphi}\|_{\infty,K}
\]
hold in the domain $K$? It is sufficient to check where Assumption \ref{ass:ass} holds in $K$. We fix $\bar{t}=0.3$, so that we are safely far from 0, and we introduce a discretisation $I_k$ of the time interval $I = [\bar{t},1]$ with stepsize $k=0.05$. We compute for each $x \in K$
\[
    \eta(x) = \min_{D\in\Omega_\alpha,\ t\in I_k}\cos\theta(\phi_t(A_1x+b_1)-\bar{\phi}_t(A_1x+b_1),-D\Delta \phi_t(A_1x+b_1)),
\]
and we make green those points for which $\eta(x)$ is positive, while we make red those points for which $\eta(x)$ is negative. We have detected in this way those regions of the Two Moons dataset where the lower bound holds. See Figure \ref{fig:tmdlb} for the green region where the lower bound holds. As $\delta$ decreases, we notice that the region, where the lower bound holds, expand. 

About the classification accuracy, we do not observe a loss of accuracy. This is reasonable, since the considered values of $\delta$ are very close to $\delta_\star$, and so the perturbed matrix $A+\Delta$ is very close to the unperturbed matrix $A$. In order to observe a loss of accuracy, we need to further decrease $\delta$. For example, we consider $\delta \in \{ \delta_\star - 2 ,\ \delta_\star - 1.5 ,\ \delta_\star - 1 \}$. Now, the region, where the lower bound holds, is the same in all three cases, but the accuracy is decreasing as $\delta$ decreases. See Figure \ref{fig:tmdlb1} for the green region where the lower bound holds, and Figure \ref{fig:tmdacc1} for the different classification obtained for the different values of $\delta$.


\begin{figure}[ht]
    \begin{subfigure}{0.32\textwidth}
        \centering
        \includegraphics[width=\textwidth]{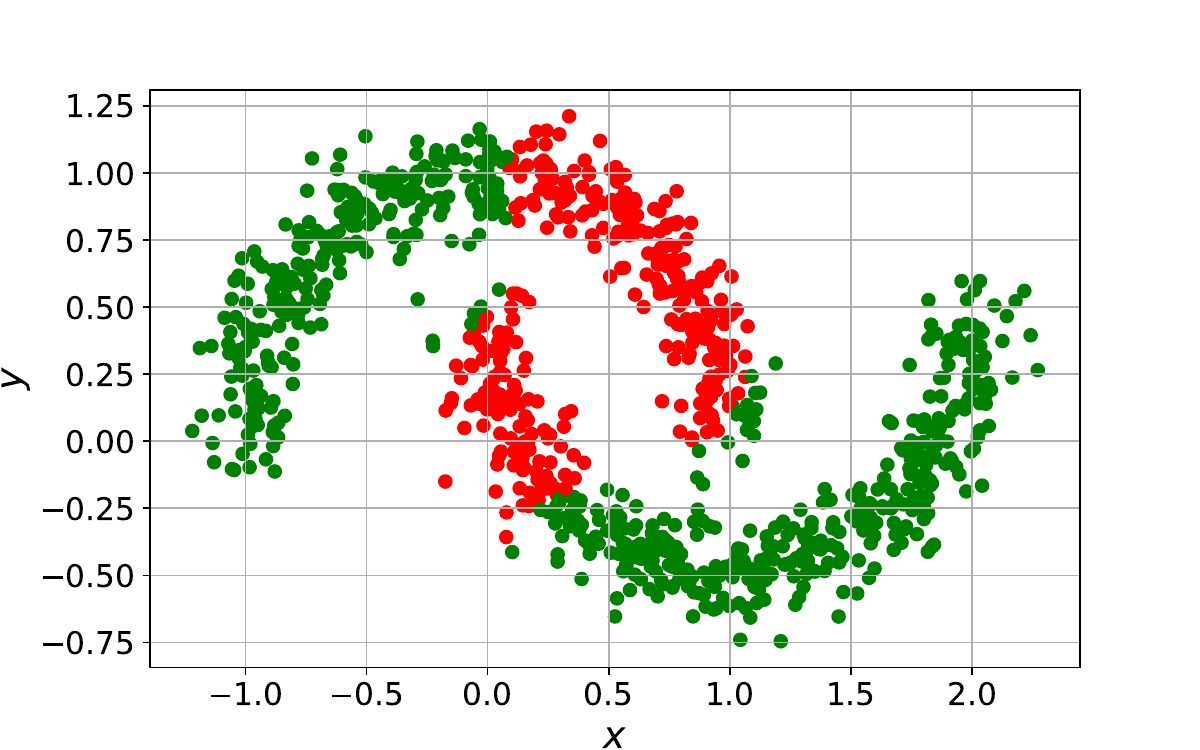}
        \caption{$\delta=\delta_\star-0.006$.}
    \end{subfigure}
    \hfill
    \begin{subfigure}{0.32\textwidth}
        \centering
        \includegraphics[width=\textwidth]{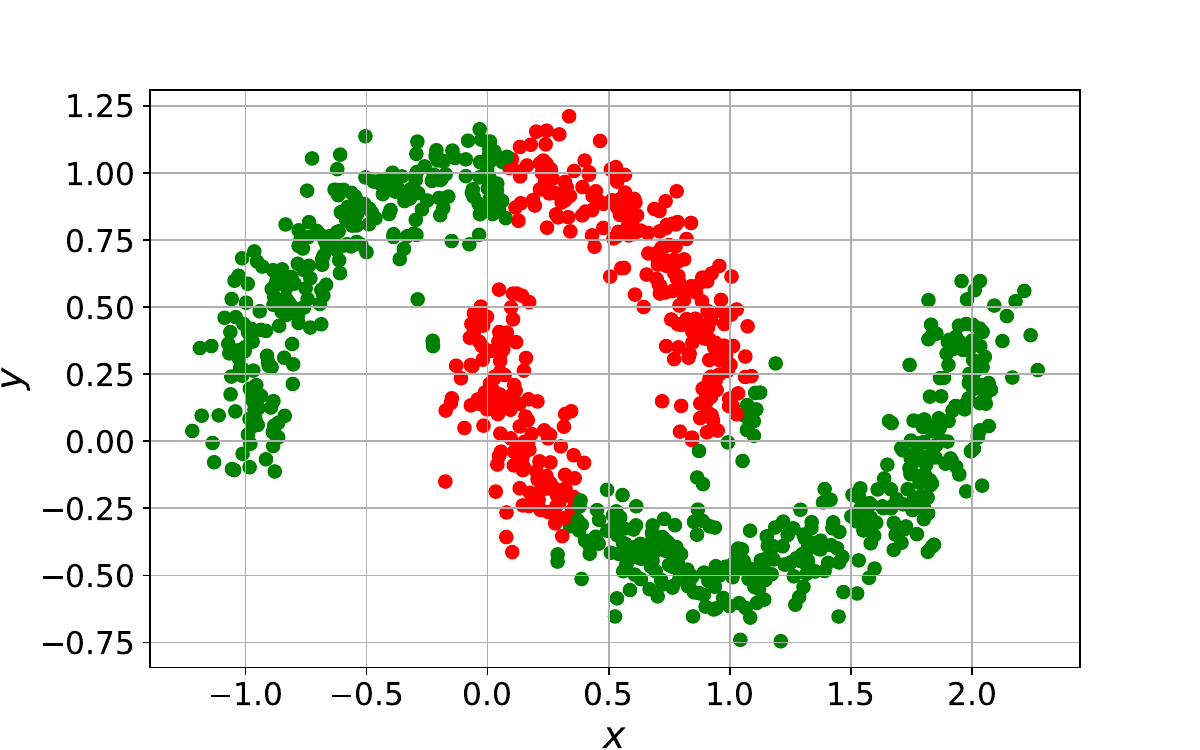}
        \caption{$\delta=\delta_\star-0.004$.}
    \end{subfigure}
    \hfill
    \begin{subfigure}{0.32\textwidth}
        \centering
        \includegraphics[width=\textwidth]{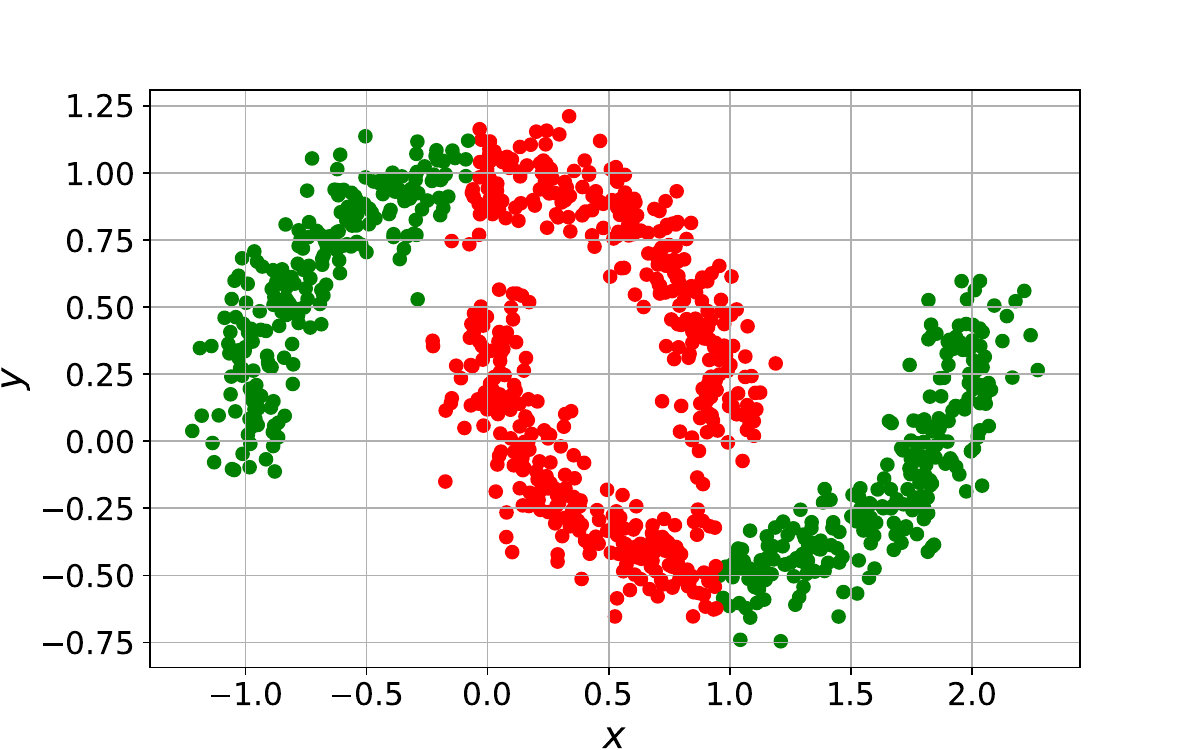}
        \caption{$\delta=\delta_\star-0.002$.}
    \end{subfigure}
    \caption{Two Moons dataset. The green region is that where the lower bound holds, while the red region is that where the lower bound may not hold.}
    \label{fig:tmdlb}
\end{figure}
\begin{figure}[ht]
    \begin{subfigure}{0.32\textwidth}
        \centering
        \includegraphics[width=\textwidth]{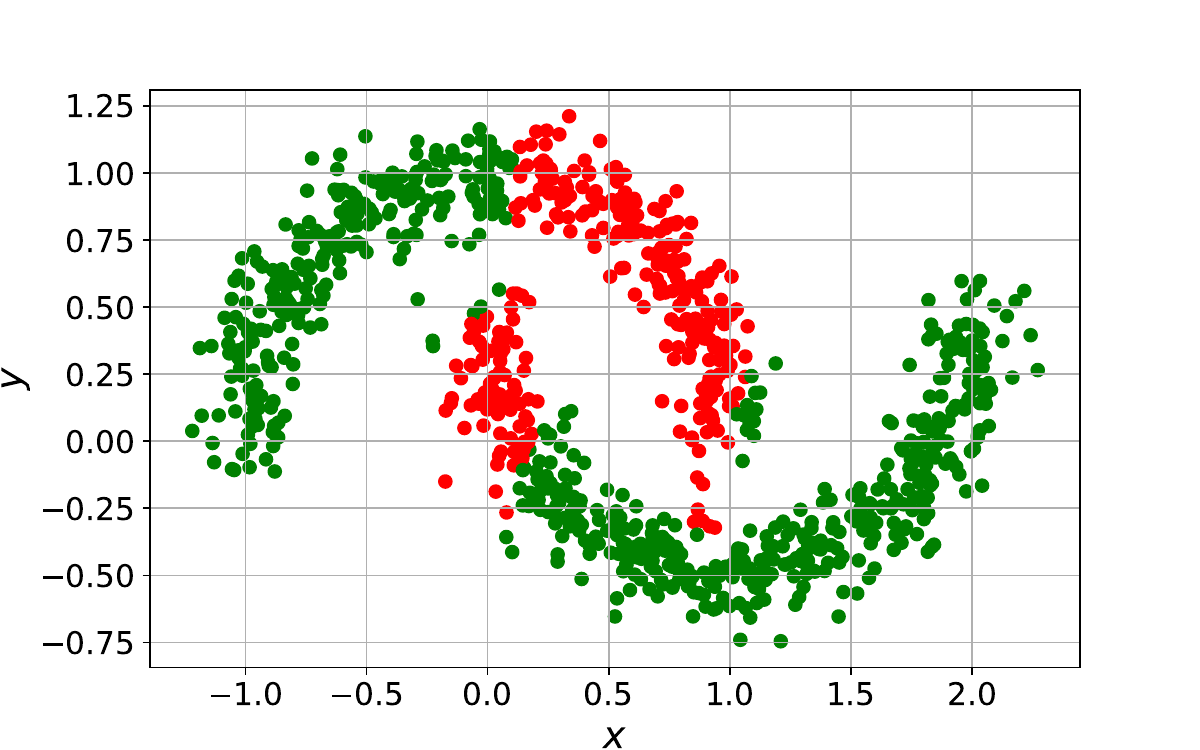}
        \caption{$\delta=\delta_\star-3$.}
    \end{subfigure}
    \hfill
    \begin{subfigure}{0.32\textwidth}
        \centering
        \includegraphics[width=\textwidth]{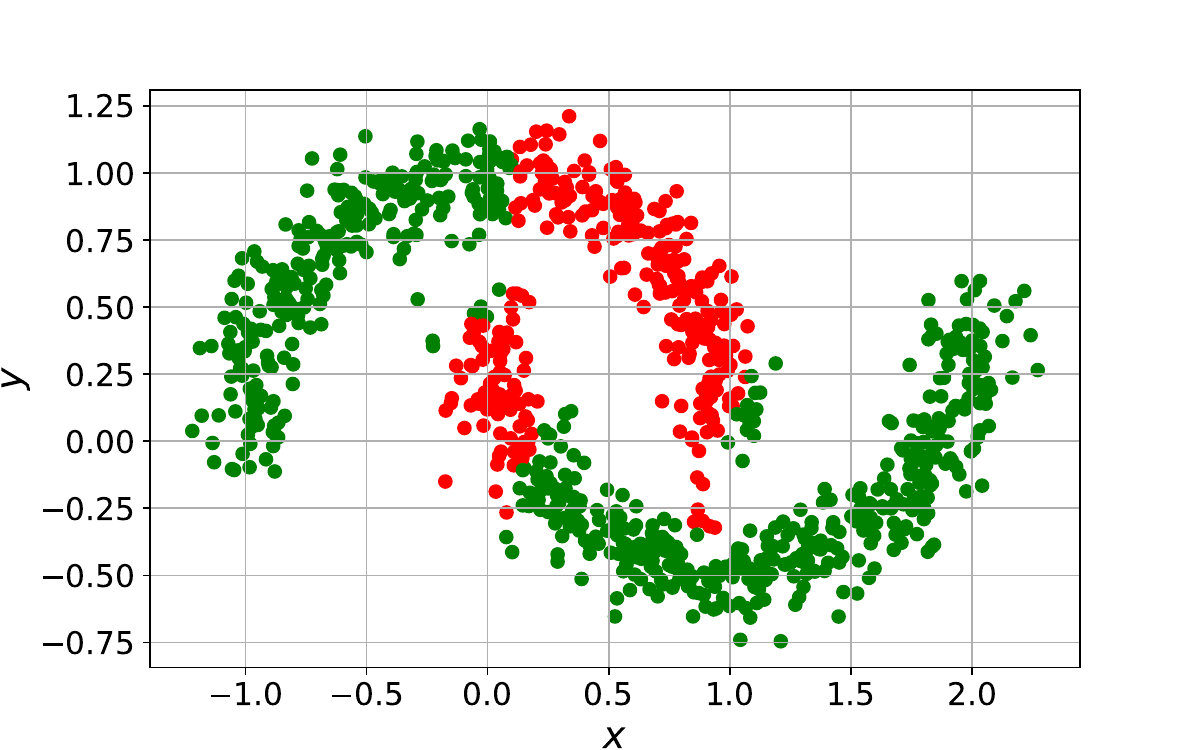}
        \caption{$\delta=\delta_\star-2$.}
    \end{subfigure}
    \hfill
    \begin{subfigure}{0.32\textwidth}
        \centering
        \includegraphics[width=\textwidth]{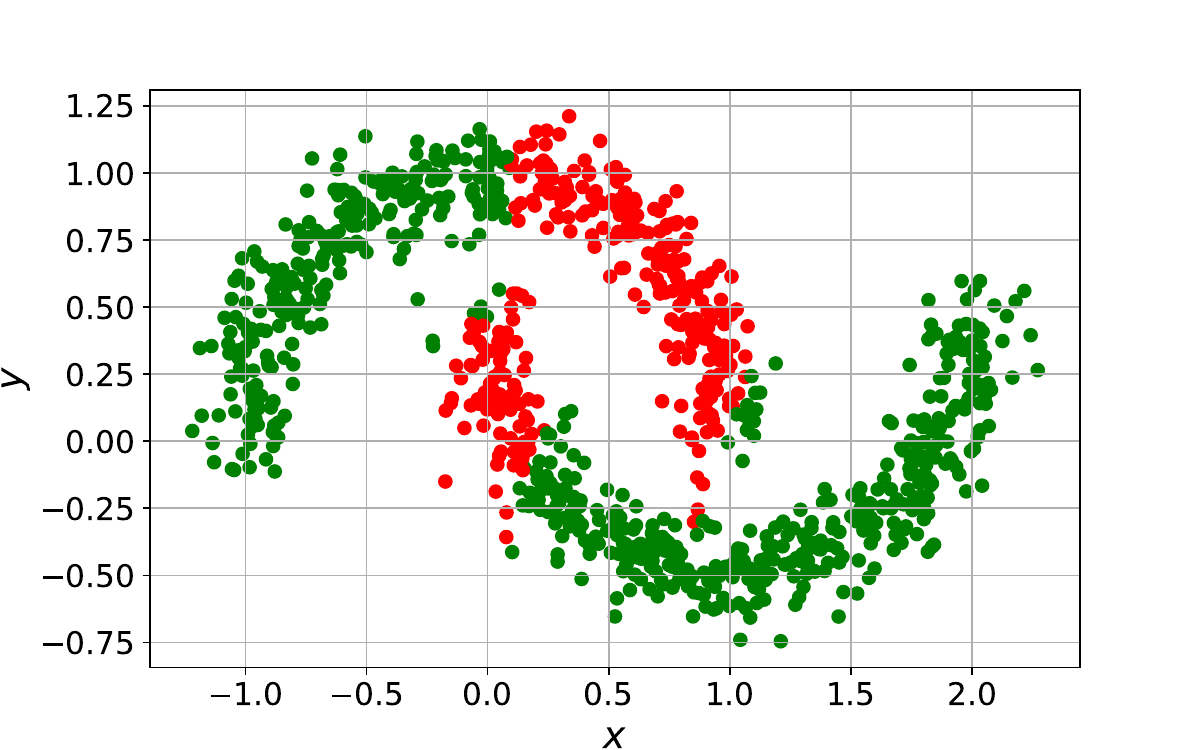}
        \caption{$\delta=\delta_\star-1$.}
    \end{subfigure}
    \caption{Two Moons dataset. The green region is that where the lower bound holds, while the red region is that where the lower bound may not hold.}
    \label{fig:tmdlb1}
\end{figure}
\begin{figure}[ht]
    \begin{subfigure}{0.32\textwidth}
        \centering
        \includegraphics[width=\textwidth]{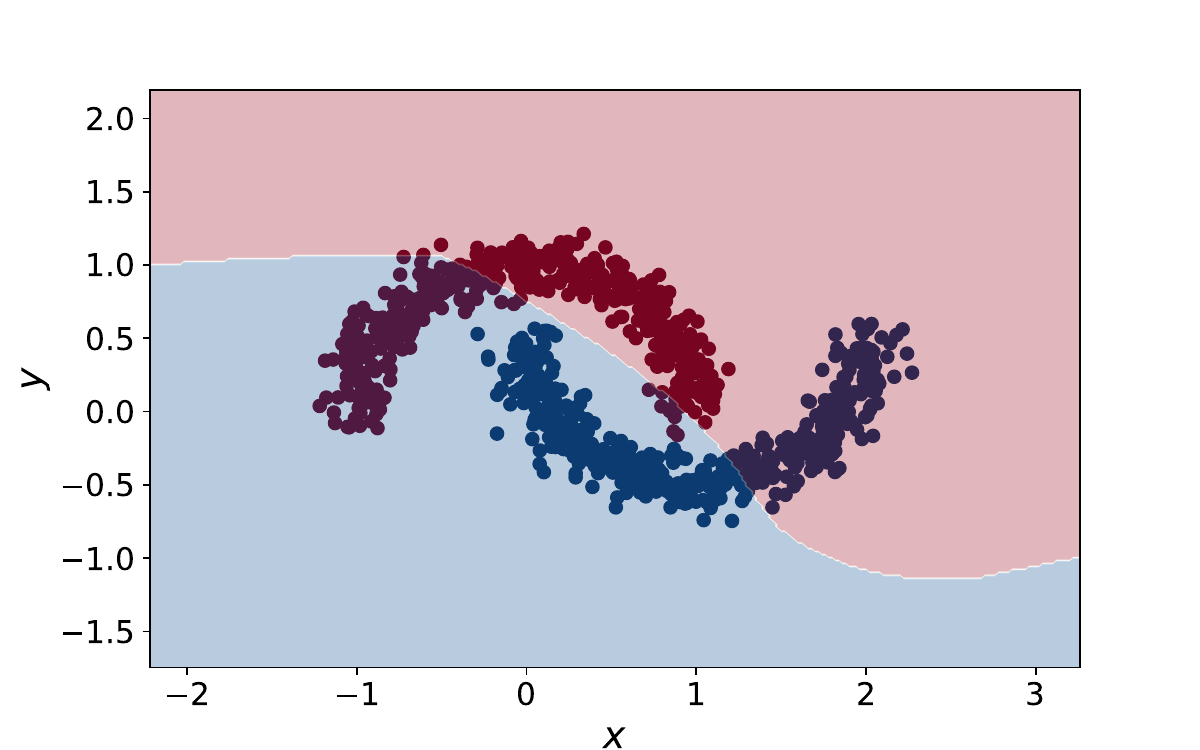}
        \caption{$\delta=\delta_\star-3$. Accuracy: 58.10\%.}
    \end{subfigure}
    \hfill
    \begin{subfigure}{0.32\textwidth}
        \centering
        \includegraphics[width=\textwidth]{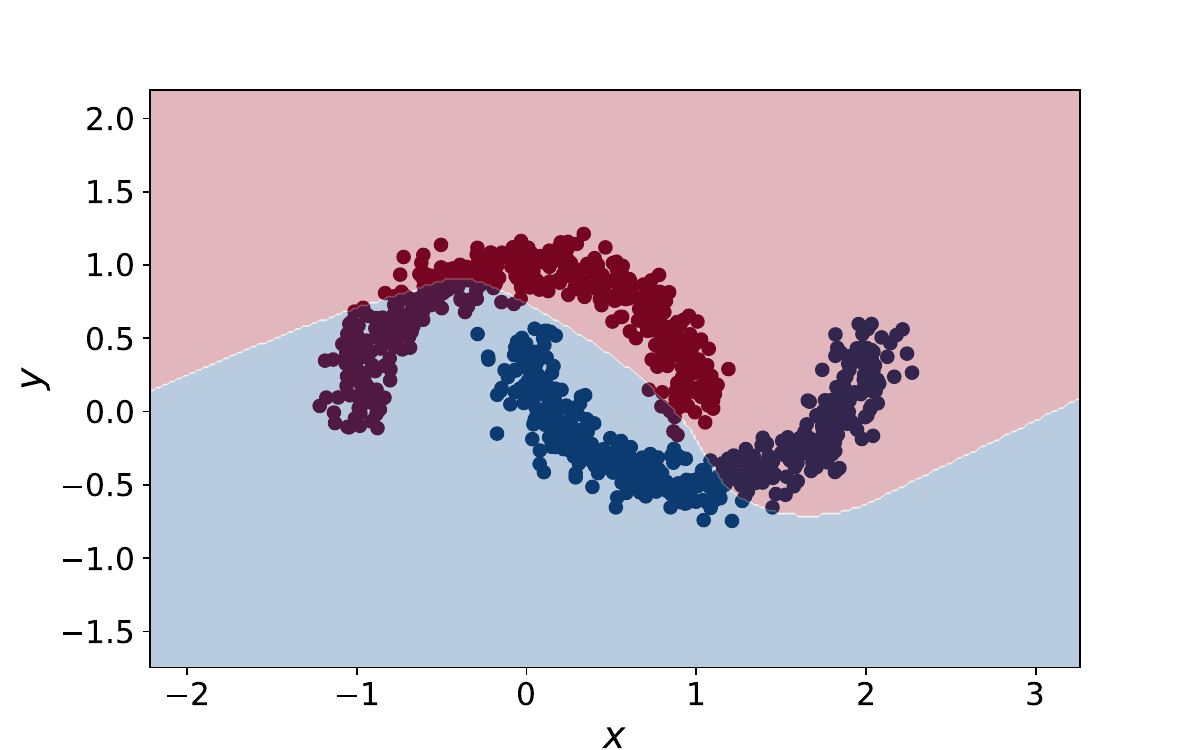}
        \caption{$\delta=\delta_\star-2$. Accuracy: 60.20\%.}
    \end{subfigure}
    \hfill
    \begin{subfigure}{0.32\textwidth}
        \centering
        \includegraphics[width=\textwidth]{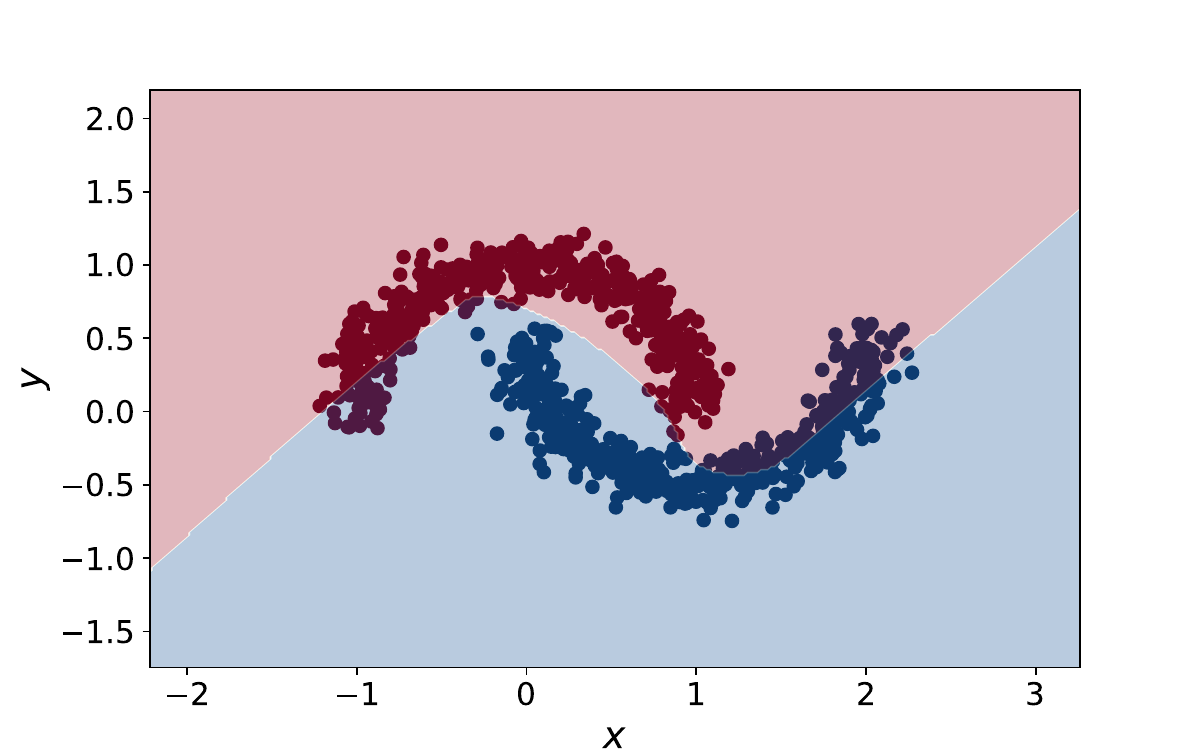}
        \caption{$\delta=\delta_\star-1$. Accuracy: 82.10\%.}
    \end{subfigure}
    \caption{Two Moons dataset. Different classification for different values of $\delta$.}
    \label{fig:tmdacc1}
\end{figure}

\section{Universal approximation theorem with Lipschitz or fixed-norm constraints}\label{sec:uat_unc}
In this section, we investigate how either limiting the Lipschitz constant of the flow map $\phi$ or adding the constraints $\|A_1\|_2\ge1$ a fixed constant and $\|A_2\|_2=1$ affects the universal approximation property of the shallow neural network \eqref{eq:snn}. The intuition tells us that the UAP should hold in both cases. One can think that the restricted expansivity of either the linear layers or the flow map could be counteracted by the unconstrained behaviour of the other part of the network.

In this case, the Lipschitz constant of $\phi\in\mathcal{F}_d$ is restricted, and formalising the universal approximation theorem does not require additional work. Indeed, what is important based on Theorem \ref{th:univ_approx_1} is that the flow map is not a polynomial. Thus, since there are Lipschitz-constrained non-polynomial flow maps, the universal approximation property also persists in this constrained regime. As an example, we can think of the differential equation
\[
\dot{u}(t) = \delta\sigma(u(t) + \lambda),
\]
where $\lambda,\delta\in\R$, $\sigma(x)=\max\{x,\alpha x\}$. The Lipschitz constant of its time$-1$ flow map $\phi$ can be upper bounded by $\exp{\delta \alpha}$, which can be made equal to any positive number. Thus, since $\sigma$ is not linear, the flow map $\phi$ is not a polynomial while still being of constrained Lipschitz constant. This proves the universal approximation theorem in this case.

We now move to the case $\phi\in\mathcal{F}_d$ is unconstrained while the weights of the linear layers are of fixed spectral norm, in particular $\|A_1\|_2\ge1$ and $\|A_2\|_2=1$. We recall that the activation function $\phi$ of \eqref{eq:snn} is the flow map of the neural ODE \eqref{eq:nODE}, depending on the matrix $A\in\R^{d\times d}$ and the vector $b\in\R^d$. If the matrices $A_1$ and $A_2$ are constrained to have fixed 2-norm, then we expect the matrix $A\in\R^{d\times d}$ and the vector $b\in\R^d$ to differ from the ones in the unconstrained case, for the flow map $\phi$ to have Lipschitz constant larger than the one of the flow map $\phi$ in the unconstrained case. In other words, the magnitude of the matrices $A_1$ and $A_2$ can be assimilated in the flow map $\phi$, with a consequent increase of its Lipschitz constant. See \eqref{eq:bound2}.

We formalise this intuition. 
We recall that the shallow neural network \eqref{eq:snn} is
\begin{equation*}
    \varphi(x) = A_2 \phi (A_1 x + b_1) + b_2, \quad x\in\R^m,
\end{equation*}
with $A_1\in\R^{d\times m}$, $b_1\in\R^d$, $A_2\in\R^{n\times d}$, $b_2\in\R^n$, and the activation function $\phi:\R^d\to\R^d$ is the flow map of the neural ODE
\begin{equation*}
    \dot{u}(t) = \sigma( A u(t) + b ), \quad t\in[0,1],
\end{equation*}
at time 1, with $A\in\R^{d\times d}$, $b\in\R^d$, $u(t)\in\R^d$ for all $t\in[0,1]$ and $\sigma:\R\to\R$, satisfying Assumption \ref{ass:1}, is applied entrywise.

We consider the piecewise autonomous neural ODE
\begin{equation}\label{eq:nanODE}
    \dot{u}(t) = f(t,u(t)) =
    \begin{cases}
        f_1(u(t)) = u(t), & t\in[t_0,0), \\
        f_2(u(t)) = \sigma( A u(t) + b ), & t\in[0,1), \\
        f_3(u(t)) = u(t), & t\in[1,T]
    \end{cases}
\end{equation}
where $t_0<0$, $T>1$, each component $f_i(u(t))$, $i=1,2,3$, is an autonomous vector field, and $f_2(u(t))$ is the same vector field of \eqref{eq:nODE}. If $u_0\in\R^d$ is an initial datum, then the flow map $\psi:\R^d\to\R^d$ of \eqref{eq:nanODE} at $T$ is
\[
    \psi(u_0) = e^{T-1} \circ \phi \circ e^{-t_0} (u_0) =  e^{T-1} \phi ( e^{-t_0} u_0 ), 
\]
where $\phi:\R^d\to\R^d$ is the flow map of the neural ODE \eqref{eq:nODE} at time 1. So $\psi$ is essentially a scaled $\phi$ both in the input and in the output, with scaling tuned by $t_0$ and $T$. By choosing the appropriate scaling, the magnitude of the matrices $A_1$ and $A_2$ can be assimilated in the flow map, and thus the universal approximation property can be proven also with fixed norm weight matrices.

In this section we then work with the shallow neural network
\begin{equation}\label{eq:nsnn}
    \varphi(x) = A_2 \psi (A_1 x + b_1) + b_2, \quad x\in\R^m,
\end{equation}
with $A_1\in\R^{d\times m}$, $b_1\in\R^d$, $A_2\in\R^{n\times d}$, $b_2\in\R^n$, and the activation function $\psi:\R^d\to\R^d$ is the flow map of the neural ODE \eqref{eq:nanODE}, i.e.
\[
    \varphi(x) = A_2 e^{T-1} \phi (e^{-t_0}(A_1 x + b_1)) + b_2, \quad x\in\R^m.
\]

We need the following definitions.

\begin{definition}
    Let $\psi_{A,b,t_0,T}:\R^d\to\R^d$ be the flow map of the neural ODE \eqref{eq:nanODE} for fixed $A\in\R^{d\times d}$, $b\in\R^d$, $t_0<0$, and $T>1$. We define the space of flow maps of neural ODE \eqref{eq:nanODE}
    \[
        \mathcal{F}_{d,1} = \{ \psi_{A,b,t_0,T} \ : \ A\in\R^{d\times d}, \ b\in\R^d, \ t_0<0,\ T>1 \}
    \]
    for all weight matrices $A\in\R^{d\times d}$, biases $b\in\R^d$, initial time $t_0<0$ and final time $T>1$.
\end{definition}

\begin{definition}
    We define the space of shallow neural networks \eqref{eq:nsnn} from $\R^m$ to $\R^n$
    \[
        \mathcal{H}_{c,1}^\star = \{ \ell_2 \circ \psi \circ \ell_1 \ : \ d\in\N,\ \ell_1\in\Ac_c(\R^m,\R^d),\ \psi\in\mathcal{F}_{d,1},\ \ell_2\in\Ac_1(\R^d,\R^n) \}
    \]
    with activation function defined as the flow map of the neural ODE \eqref{eq:nanODE}.
\end{definition}

We remark that $\mathcal{H}_{c,1}^\star$ is defined as $\mathcal{H}_{c,1}$, where the flow map belongs to $\mathcal{F}_{d,1}$ instead of $\mathcal{F}_d$. We are ready to state the last main result of this paper.

\begin{theorem}\label{th:univ_approx_2}
    The space of functions 
    \[
        \mathcal{H}_{c,1}^\star = \{ \ell_2 \circ \psi \circ \ell_1 \ : \ d\in\N,\ \ell_1\in\Ac_c(\R^m,\R^d),\ \psi\in\mathcal{F}_{d,1},\ \ell_2\in\Ac_1(\R^d,\R^n) \}
    \]
    is a universal approximator for $\cC(\R^{m},\R^{n})$ under the compact convergence topology.
\end{theorem}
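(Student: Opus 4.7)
The plan is to reduce the statement to Theorem \ref{th:univ_approx_1} by reparametrising an unconstrained approximator so that it lies in $\cH_{c,1}^\star$. Given $f\in\cC(\R^m,\R^n)$, a compact $K\subset\R^m$, and $\varepsilon>0$, Theorem \ref{th:univ_approx_1} produces $\varphi_0(x)=\tilde A_2\,\phi(\tilde A_1 x+\tilde b_1)+\tilde b_2\in\cH$ with $\|f-\varphi_0\|_{\infty,K}\le\varepsilon$, where $\phi$ is the time-$1$ flow of $\dot u=\sigma(A_0 u+b_0)$. Inspection of that proof shows we may in fact take $A_0=I$ and $b_0=\lambda e$, so that $\phi=\phi_\lambda$ acts entrywise as the scalar flow; I will exploit this entrywise structure below. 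The goal is to exhibit the same function $\varphi_0$ as an element of $\cH_{c,1}^\star$.

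The central algebraic identity is the explicit factorisation of the piecewise-autonomous flow, $\psi(u)=e^{T-1}\,\phi_{A_0,b_0}(e^{-t_0}u)$, obtained by integrating \eqref{eq:nanODE} on $[t_0,0)$ and $[1,T]$. Expanding
\[
A_2\,\psi(A_1 x+b_1)+b_2 = e^{T-1}A_2\,\phi_{A_0,b_0}\!\bigl(e^{-t_0}(A_1 x+b_1)\bigr)+b_2
\]
and matching against $\varphi_0$, I would set
\[
A_1=\tfrac{c}{\|\tilde A_1\|_2}\tilde A_1,\quad A_2=\tfrac{1}{\|\tilde A_2\|_2}\tilde A_2,\quad b_1=\tfrac{c}{\|\tilde A_1\|_2}\tilde b_1,\quad b_2=\tilde b_2,
\]
and $t_0=\log(c/\|\tilde A_1\|_2)$, $T=1+\log\|\tilde A_2\|_2$, reusing the same middle-block data $(A_0,b_0)$ in the piecewise-autonomous ODE. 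Then $\|A_1\|_2=c$ and $\|A_2\|_2=1$ hold automatically, but the feasibility constraints $t_0<0$ and $T>1$ become the strict inequalities $\|\tilde A_1\|_2>c$ and $\|\tilde A_2\|_2>1$.

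The main obstacle is thus to replace $\varphi_0$ by an equivalent parametrisation fulfilling both strict inequalities, and I plan to do this by two norm-inflating enlargements of the hidden dimension, neither of which alters $\varphi_0$ as a function and both of which preserve the entrywise structure of $\phi_\lambda$: (i) to raise $\|\tilde A_1\|_2$ above $c$, append rows $B$ with $\|B\|_2>c$ to $\tilde A_1$ (with arbitrary matching bias entries), enlarge $A_0$ block-diagonally to the identity of the new size, pad $b_0$ with $\lambda$ in each new slot, and append zero columns to $\tilde A_2$, so that the new coordinates evolve entrywise under $\phi_\lambda$ but are ignored by the output map; (ii) to raise $\|\tilde A_2\|_2$ above $1$, pick any index $i$, duplicate row $i$ of $\tilde A_1$ and the bias entry $\tilde b_{1,i}$ twice, and append columns $+v,-v$ to $\tilde A_2$ with $\|v\|_2$ arbitrarily large: because the two replicas carry identical initial conditions and hence identical hidden activations under the entrywise $\phi_\lambda$, their output contributions $+v\,\phi_\lambda(\cdot)-v\,\phi_\lambda(\cdot)$ cancel pointwise, while the augmented $\tilde A_2$ satisfies $\|\tilde A_2\|_2\ge\sqrt{2}\,\|v\|_2$ (by testing the Gram matrix on $v$). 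Combining (i) and (ii) drives $\|\tilde A_1\|_2$ past $c$ and $\|\tilde A_2\|_2$ past $1$ while leaving $\varphi_0$ and the bound $\|f-\varphi_0\|_{\infty,K}\le\varepsilon$ intact, and the reparametrisation of the second paragraph then exhibits this $\varphi_0$ as an element of $\cH_{c,1}^\star$, from which density follows.
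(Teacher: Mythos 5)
Your proposal follows the same core route as the paper's proof: take an unconstrained approximator $\varphi_0=\tilde A_2\phi(\tilde A_1x+\tilde b_1)+\tilde b_2$ from Theorem \ref{th:univ_approx_1}, normalise the affine layers to $\hat A_1=c\tilde A_1/\|\tilde A_1\|_2$ and $\hat A_2=\tilde A_2/\|\tilde A_2\|_2$, and absorb the discarded scalar factors into the flow map via the choices $e^{-t_0}=\|\tilde A_1\|_2/c$ and $e^{T-1}=\|\tilde A_2\|_2$ in the piecewise-autonomous ODE \eqref{eq:nanODE}. So the identification of $\varphi_0$ as an element of $\mathcal{H}_{c,1}^\star$ is exactly the paper's argument.

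Where you go beyond the paper is in noticing that the feasibility conditions $t_0<0$ and $T>1$ force the strict inequalities $\|\tilde A_1\|_2>c$ and $\|\tilde A_2\|_2>1$, which the paper's proof silently assumes. Your repair is sound: exploiting that the approximator from Theorem \ref{th:univ_approx_1} can be taken in $\mathcal{H}_0$ (so that $A_0=I$, $b_0=\lambda e$ and the flow acts entrywise), padding $\tilde A_1$ with extra rows killed by zero columns of $\tilde A_2$ inflates $\|\tilde A_1\|_2$ past $c$ without changing the function, and the duplicated-row trick with output columns $+v,-v$ cancels pointwise while giving $\|\tilde A_2\|_2\ge\sqrt{2}\,\|v\|_2>1$; neither enlargement leaves the class of admissible neural ODEs. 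This makes your version strictly more careful than the paper's, at the modest cost of tying the argument to the entrywise subfamily $\mathcal{F}_{d,0}$ rather than to a generic $\phi\in\mathcal{F}_d$.
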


\begin{proof}
    Let $f\in \cC(\R^{m},\R^{n})$, $\varepsilon>0$, and $K$ a compact subset of $\R^m$. By Theorem \ref{th:univ_approx_1}, there exists $\varphi\in\mathcal{H}$ such that
    \[
        \|f-\varphi\|_{\infty,K} \le \varepsilon.
    \]
    
    By definition of $\mathcal{H}$, there exists $\ell_1\in\Ac(\R^m,\R^d)$, $\phi\in\mathcal{F}_d$ and $\ell_2\in\Ac(\R^d,\R^n)$ such that
    \[
        \varphi(x) = \ell_2\circ\phi\circ\ell_1(x),
    \]
    i.e. there exist $A_1\in\R^{d\times m}$, $b_1\in\R^d$, $A\in\R^{d\times d}$, $b\in\R^d$, $A_2\in\R^{n\times d}$, $b_2\in\R^n$ such that
    \[
        \varphi(x) = A_2\phi(A_1 x + b_1) + b_2,
    \]
    where $\phi$ is the flow map of the neural ODE \eqref{eq:nODE}. We can rewrite $\varphi$ as
    \begin{align*}
        \varphi(x) &= \frac{A_2}{\|A_2\|_2}\|A_2\|_2 \phi \left( \frac{\|A_1\|_2}{c} \left( c\frac{A_1}{\|A_1\|_2} x + c\frac{b_1}{\|A_1\|_2} \right) \right) + b_2 \\
        &= \hat{A}_2 \|A_2\|_2 \phi \left( \frac{\|A_1\|_2}{c} \left( \hat{A}_1 x + \hat{b}_1 \right) \right) + \hat{b}_2,
    \end{align*}
    with $\hat{A}_1 := c A_1 / \|A_1\|_2$, $\hat{b}_1 := c b_1 / \|A_1\|_2$, $\hat{A}_2 := A_2 / \|A_2\|_2$ and $\hat{b}_2 := b_2$.

    Then, we determine $t_0<0$ and $T>1$ such that
    \[
        e^{-t_0} = \frac{\|A_1\|_2}{c} \quad \text{and} \quad e^{T-1} = \|A_2\|_2,
    \]
    which are
    \[
        t_0 = -\log\frac{\|A_1\|_2}{c} \quad \text{and} \quad T = \log\|A_2\|_2 + 1.
    \]
    Therefore, there exists a neural ODE \eqref{eq:nanODE} with $t_0$ and $T$ as above such that its flow map $\psi$ is
    \[
        \psi = \|A_2\|_2 \circ \phi \circ \|A_1\|_2 = e^{T-1} \circ \phi \circ e^{-t_0},
    \]
    and $\psi\in\mathcal{F}_{d,1}$.
    
    Eventually, if we define
    \[
        \hat{\ell}_1(u) = \hat{A}_1 u + \hat{b}_1 \quad \text{and} \quad \hat{\ell}_2(u) = \hat{A}_2 u + \hat{b}_2,
    \]
    we have that $\hat{\ell}_1\in\Ac_c(\R^m,\R^d)$ and $\hat{\ell}_2\in\Ac_1(\R^d,\R^n)$ and
    \[
        \varphi = \ell_2 \circ \phi \circ \ell_1 = \hat{\ell}_2 \circ \psi \circ \hat{\ell}_1 \in \mathcal{H}_{c,1}^\star.
    \]
    Therefore also $\mathcal{H}_{c,1}^\star$ is a universal approximator for $\cC(\R^{m},\R^{n})$ under the compact convergence topology.
\end{proof}

\begin{remark}\label{rem:comment}
    Introducing neural ODE \eqref{eq:nanODE} is only a theoretical expedient to prove the universal approximation property with fixed norm constraint on $A_1$ and $A_2$. A practical reason the implementation of neural ODE \eqref{eq:nanODE} is not possible is the lack of knowledge a priori of $t_0$ and $T$. In practise it happens that the Lipschitz constant of the flow map $\phi$ of the neural ODE \eqref{eq:nODE} with constrained $A_1$ and $A_2$ is larger than the Lipschitz constant of the flow map $\phi$ with unconstrained $A_1$ and $A_2$. One may think that the magnitudes of $A_1$ and $A_2$ are assimilated by the Lipschitz constant of the flow map $\phi$.
\end{remark}

\section{Conclusions}
We have studied the approximation properties of neural ordinary differential equations (neural ODEs) in the space of continuous functions. Since a neural ODE requires input and output dimensions to be the same, while input and output dimensions of a continuous function are generally different, we have needed to embed an input into the latent space of the neural ODE, and to project the output of the neural ODE into the output space. By composing the neural ODE flow map with such embedding and projection operations, we have got a shallow neural network whose activation function is defined as the flow map of the neural ODE at the final time of the integration interval. Thus, the study of the approximation properties of neural ODEs has led to the study of the approximation properties of shallow neural networks with a particular choice of activation function. We have proven the universal approximation property (UAP) of such shallow neural networks in the space of continuous functions. Furthermore, we have investigated the approximation properties of shallow neural networks whose parameters satisfy specific constraints. In particular, we have constrained the Lipschitz constant of the neural ODE's flow map and the norms of the weights to increase the network's stability. We have proven that the UAP holds if we consider either constraint independently. When both are enforced, there is a loss of expressiveness, and we have derived approximation bounds that quantify how accurately such a constrained network can approximate a continuous function.

\section*{Acknowledgements and Disclosure of Funding}
N.G.\ acknowledges that his research was supported by funds from the Italian MUR (Ministero dell'Universit\`a e della Ricerca) within the PRIN 2022 Project ``Advanced numerical methods for time dependent parametric partial differential equations with applications'' and the PRO3 joint project entitled ``Calcolo scientifico per le scienze naturali, sociali e applicazioni: sviluppo metodologico e tecnologico''. N.G. and F.T. acknowledge support from MUR-PRO3 grant STANDS and PRIN-PNRR grant FIN4GEO. A.D.M., N.G. and F.T. are members of the INdAM-GNCS (Gruppo Nazionale di Calcolo Scientifico). E.C. and B.O. acknowledge that their research was supported by Horizon Europe and MSCA-SE project 101131557 (REMODEL). D.M. acknowledges support from the EPSRC programme grant in ``The Mathematics of Deep Learning'', under the project EP/V026259/1, the Department of Mathematical Sciences of NTNU, and the Trond Mohn Foundation for the support during this project.


\end{document}